\title{A rational splitting of a based mapping space}
\author{Katsuhiko Kuribayashi}
\address{Department of Mathematical Sciences\\
Faculty of Science\\
Shinshu University\\\newline
Matsumoto\\
Nagano 390-8621\\
Japan}
\email{kuri@math.shinshu-u.ac.jp}
\author{Toshihiro Yamaguchi}
\address{Department of Mathematics Education\\
Faculty of Education\\
Kochi University\\\newline
Kochi 780-8520\\
Japan}
\email{tyamag@cc.kochi-u.ac.jp}
\let\xysavmatrix\xymatrix
\def\xymatrix{\disablesubscriptcorrection\xysavmatrix}
\newcommand{\we}{\smash{\rlap{\kern 6pt 
\raise 4pt\hbox{\footnotesize $\sim$}}}\longrightarrow}
\numberwithin{equation}{section}
\def\cnewtheorem#1[#2]#3{\newtheorem{#1}{#3}[section]
\expandafter\let\csname c@#1\endcsname\c@thm}
\newtheorem{thm}{Theorem}[section]
\theoremstyle{definition}
\theoremstyle{remark}
\newcommand{\F}{{\mathcal F}}
\newcommand{\e}{\varepsilon}
\newcommand{\hq}[1]{%
 H^*({#1}; {\mathbb Q})}
\newcommand{\mapright}[1]{%
 \smash{\mathop{%
  \hbox to 1cm{\rightarrowfill}}\limits_{#1}}}
\newcommand{\maprightd}[2]{%
 \smash{\mathop{%
  \hbox to 1.2cm{\rightarrowfill}}\limits^{#1}\limits_{#2}}}
\newcommand{\mapleft}[1]{%
 \smash{\mathop{%
  \hbox to 1cm{\leftarrowfill}}\limits_{#1}}}
\newcommand{\mapleftu}[1]{%
 \smash{\mathop{%
  \hbox to 1cm{\leftarrowfill}}\limits^{#1}}}
\newcommand{\mapleftud}[2]{%
 \smash{\mathop{%
  \hbox to 1.2cm{\leftarrowfill}}\limits^{#1}\limits_{#2}}}
\newcommand{\maprightu}[1]{%
 \smash{\mathop{%
  \hbox to 1cm{\rightarrowfill}}\limits^{#1}}}
\newcommand{\maprightud}[2]{%
 \smash{\mathop{%
  \hbox to 1cm{\rightarrowfill}}\limits^{#1}_{#2}}}
\begin{document}

\begin{asciiabstract}
Let F_*(X, Y) be the space of base-point-preserving maps from a
connected finite CW complex X to a connected space Y.  Consider a CW
complex of the form X cup_{alpha}e^{k+1} and a space Y whose
connectivity exceeds the dimension of the adjunction space.  Using a
Quillen-Sullivan mixed type model for a based mapping space, we prove
that, if the bracket length of the attaching map alpha: S^k --> X is
greater than the Whitehead length WL(Y) of Y, then F_*(X
cup_{alpha}e^{k+1}, Y) has the rational homotopy type of the product
space F_*(X, Y) times Omega^{k+1}Y.  This result yields that if the
bracket lengths of all the attaching maps constructing a finite CW
complex X are greater than WL(Y) and the connectivity of Y is greater
than or equal to dim X, then the mapping space F_*(X, Y) can be
decomposed rationally as the product of iterated loop spaces.
\end{asciiabstract}

\begin{htmlabstract}
Let F<sub>*</sub>(X,Y) be the space of base-point-preserving maps from a
connected finite CW complex X to a connected space Y.  Consider a
CW complex of the form X&cup;<sub>&alpha;</sub>e<sup>k+1</sup> and a
space Y whose
connectivity exceeds the dimension of the adjunction space.  Using a
Quillen&ndash;Sullivan mixed type model for a based mapping space, we prove
that, if the <i>bracket length</i> of the attaching map
&alpha;:S<sup>k</sup>&rarr;X is greater than the Whitehead length WL(Y) of Y,
then F<sub>*</sub>(X&cup;<sub>&alpha;</sub>e<sup>k+1</sup>,Y) has the
rational homotopy type
of the product space F<sub>*</sub>(X,Y)&times;&Omega;<sup>k+1</sup>Y.
This result
yields that if the bracket lengths of all the attaching maps
constructing a finite CW complex X are greater than
WL(Y) and the connectivity of Y is greater than or equal
to dim X, then the mapping space F<sub>*</sub>(X,Y) can be
decomposed rationally as the product of iterated loop spaces.
\end{htmlabstract}

\begin{abstract}
Let $\mathcal{F}_*(X, Y)$ be the space of base-point-preserving maps from a
connected finite CW complex $X$ to a connected space $Y$.  Consider a
CW complex of the form $X\cup_{\alpha}e^{k+1}$ and a space $Y$ whose
connectivity exceeds the dimension of the adjunction space.  Using a
Quillen--Sullivan mixed type model for a based mapping space, we prove
that, if the {\it bracket length} of the attaching map $\alpha : S^k
\to X$ is greater than the Whitehead length $\mathrm{WL}(Y)$ of $Y$,
then $\mathcal{F}_*(X\cup_{\alpha}e^{k+1}, Y)$ has the rational homotopy type
of the product space $\mathcal{F}_*(X, Y)\times \Omega^{k+1}Y$.  This result
yields that if the bracket lengths of all the attaching maps
constructing a finite CW complex $X$ are greater than
$\mathrm{WL}(Y)$ and the connectivity of $Y$ is greater than or equal
to $\mathrm{dim} X$, then the mapping space $\mathcal{F}_*(X, Y)$ can be
decomposed rationally as the product of iterated loop spaces.
\end{abstract}

\maketitle

\section{Introduction}\label{sec:1}
Let $X$ be a connected finite CW complex with basepoint 
and $X\cup_{\alpha}e^{k+1}$ the adjunction space obtained 
by attaching the cell $e^{k+1}$ to $X$ along 
a cellular map $\alpha \co  S^k \to X$. 
Let $\F_*(X, Y)$ denote the space of base-point-preserving maps 
from  $X$ to a connected space $Y$ with basepoint. The cofibre sequence
$X \stackrel{i}{\to} X\cup_{\alpha}e^{k+1}  \stackrel{j}{\to} S^{k+1}$
gives rise to  the fibration
$$
\Omega^{k+1}Y = \F_*(S^{k+1}, Y) \stackrel{j^\sharp}{\to}
\F_*(X\cup_{\alpha}e^{k+1}, Y)  \stackrel{i^\sharp}{\to}
\F_*(X, Y).
$$
The aim of this article is to consider when the above fibration splits 
after localization at zero. 
Roughly speaking,  our main theorem described below asserts that such a
splitting is possible if a number which expresses complexity of the
attaching map
$\alpha \co  S^k \to X$ is greater than the nilpotency of the rational
homotopy Lie algebra of $Y$. 
In order to state the theorem more precisely, we first introduce 
the number associated with a map $\alpha \co  S^k \to X$.
Let $L$ be a graded Lie algebra. We define  a subspace $[L, L]^{(l)}$
of $L$ by $[L, L]^{(l)} = [L, [L, [ ..., [L, L] ... ]]$ ($l$--times)
and  $[L, L]^{(0)}= L$, where
$[ \ , \ ]$ denotes the Lie bracket of $L$.
Observe that $[L, L]^{(l+1)}$ is a subspace of $[L, L]^{(l)}$.

\begin{defn}
Let $X$ be a simply-connected space. 
The {\it bracket length} of a map $\alpha \co  S^k \to X$,
written $\text{bl}(\alpha)$, is
the greatest integer $n$ such that
the class of the adjoint map $\text{ad}(\alpha) \co  S^{k-1} \to \Omega X$ to 
$\alpha$ is in $[L_X, L_X]^{(n)}$, where $L_X$ 
denotes the homotopy Lie algebra $\pi_*(\Omega X)\otimes \Q$.
If the map $\text{ad}(\alpha)$ is in  $[L_X, L_X]^{(n)}$ for any $n$, then 
$\text{bl}(\alpha)=\infty$. 
\end{defn}

Recall the {\it Whitehead length} $\text{WL}(Y)$
of $Y$ which is the greatest integer $n$ such that
$[L_Y, L_Y]^{(n)}\neq 0$ (see for example Berstein and Ganea \cite{B-G}).

In what follows, we assume that
a space is based and its rational cohomology is locally
finite. The connectivity of a space $Y$ may be denoted by $\text{Conn}(Y)$.
For a nilpotent space $X$, we denote by $X_\Q$ the $\Q$--localization of $X$. 
Our main theorem can be  stated as follows: 

\begin{thm}
 \label{thm:main}
 Let $\alpha \co  S^k \to X$ be a cellular map from the
$k$--dimensional sphere to a simply-connected finite CW complex $X$,
where $k > 0$.
Let $Y$ be a space such that $\text{\em Conn}(Y)\geq 
\text{\em max}\{k+1, \dim X\}$.
If $\text{\em bl}(\alpha) > \text{\em WL}(Y)$,
then the fibration
\begin{equation}\label{eq:1.1}
\Omega^{k+1}Y=\F_*(S^{k+1}, Y) \stackrel{j^\sharp}{\to}
\F_*(X\cup_{\alpha}e^{k+1}, Y)  \stackrel{i^\sharp}{\to}
\F_*(X, Y)
\end{equation}
is rationally trivial; that is, there is a
 homotopy equivalence 
$$
\F_*(X\cup_{\alpha}e^{k+1}, Y)_{\Q}
\stackrel{\simeq}{\to}  (\F_*(X, Y) \times \Omega^{k+1}Y)_{\Q}
$$
which covers the identity map on $\F_*(X, Y)_{\Q}$.
\end{thm}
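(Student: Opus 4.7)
The plan is to work with a mixed Quillen--Sullivan model of the mapping space: a free Lie model for the source and a Sullivan model for the target. The adjunction space $X\cup_\alpha e^{k+1}$ has an especially simple Lie model, and the hypothesis $\text{bl}(\alpha) > \text{WL}(Y)$ is tailored to make the resulting Koszul--Sullivan extension split as a tensor product, which in turn will trivialise the fibration \eqref{eq:1.1} rationally.

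First I would fix a free Lie model $(L,\partial)$ of $X$ and a Sullivan model $(\Lambda W, d)$ of $Y$. The connectivity assumption $\text{Conn}(Y) \geq \dim X$ guarantees that each mapping space appearing in the fibration is nilpotent with locally finite rational cohomology, so that the mixed model actually computes its rational homotopy type. Since $X\cup_\alpha e^{k+1}$ is the mapping cone of $\alpha$, its Lie model is $(L',\partial')$ obtained from $(L,\partial)$ by adjoining a single generator $u$ of degree $k$, with $\partial'u$ a cycle in $L$ representing the class $\text{ad}(\alpha)\in L_X$. Applying the mixed construction then yields a Sullivan model of $\F_*(X\cup_\alpha e^{k+1}, Y)$ whose generating vector space decomposes as
$$
\bigl(W \otimes sL^\vee\bigr)\,\oplus\,\bigl(W \otimes s(\mathbb{Q} u)^\vee\bigr),
$$
with the first summand generating the model of $\F_*(X,Y)$ (pulled back along $i^\sharp$) and the second, a mere degree shift of $W$, generating the model of $\Omega^{k+1}Y$.

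The fibration \eqref{eq:1.1} is then rationally trivial precisely when the total differential $D$ on this Sullivan model respects the above direct sum at the level of indecomposables, i.e.\ when the cross term sending $W \otimes s(\mathbb{Q} u)^\vee$ into $W\otimes sL^\vee$ vanishes. The mixed construction makes this cross term explicit: it is obtained by pairing $\partial'u$ against the higher-order parts of $d$. By the bracket length hypothesis, $\partial'u$ lies in $[L, L]^{(n+1)}$ for $n = \text{WL}(Y)$, so it is a sum of iterated Lie brackets of length at least $n+1$ in $L$. Dually, each such bracket contributes to the cross term a factor proportional to an $(n{+}1)$-fold iterated Whitehead bracket in $L_Y$; but $[L_Y, L_Y]^{(n+1)} = 0$ by the very definition of $\text{WL}(Y)$, so every such contribution vanishes. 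The Koszul--Sullivan extension therefore splits as a tensor product, and Sullivan's equivalence of categories promotes this algebraic splitting to a homotopy equivalence $\F_*(X\cup_\alpha e^{k+1}, Y)_\Q \simeq (\F_*(X, Y)\times\Omega^{k+1}Y)_\Q$ over the identity of $\F_*(X,Y)_\Q$.

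The main obstacle will be the dualization in the previous paragraph: translating an ``iterated Lie bracket of length $n+1$ in $L$'' into an ``operation factoring through the $(n{+}1)$-fold iterated Whitehead bracket on $L_Y$'' and accounting for \emph{every} contribution to the total differential of the mixed model, including those coming from the non-quadratic pieces of $d$ and from higher corrections inherent to the mixed construction. I expect this to reduce to careful bookkeeping around the natural pairing between the Lie differential on $L'$ and the non-linear part of the Sullivan differential on $\Lambda W$; once that identification is firmly in place, the bracket length versus Whitehead length comparison kills the cross term in one stroke, and the connectivity hypothesis supplies the standard Sullivan machinery needed to pass from models to spaces.
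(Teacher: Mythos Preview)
Your framework matches the paper exactly: a free Lie model $(\mathbb{L}_W,d_L)$ for $X$, adjoin a generator $w_\alpha$ with $dw_\alpha=z_\alpha$ representing $\mathrm{ad}(\alpha)$, take a minimal model $(\wedge V,d)$ for $Y$, and use the mixed model $\Q[V\otimes C_*(\mathbb{L}_{W\oplus\Q\{w_\alpha\}})^+]$ for the total space. You also correctly identify the goal: show that the minimal-model differential on the ``fiber'' generators corresponding to $V\otimes sw_\alpha$ can be made to vanish.

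The gap is in the mechanism you propose. Your claim that ``each such bracket contributes to the cross term a factor proportional to an $(n{+}1)$-fold iterated Whitehead bracket in $L_Y$'' is not how the vanishing comes about. The naive element $v\otimes sw_\alpha$ is \emph{not} a cycle: its differential is $\pm\,v\otimes sz_\alpha$, a nonzero element of the base model which involves no Whitehead product in $L_Y$ whatsoever. What the paper does instead is an iterative correction. One first replaces $sw_\alpha$ by the cycle
$c_\alpha=sw_\alpha-\sum_i sx_{i_n}\wedge s[x_{i_{n-1}},[\ldots,[x_{i_1},x_{i_0}]\ldots]]$
in $C_*(\mathbb{L}_{W\oplus\Q\{w_\alpha\}})$, peeling off one bracket layer; then $\gamma_1=v\otimes c_\alpha$ has $\overline{\delta}\gamma_1$ lying in wordlength $2$ and governed solely by the quadratic part $d_1$ of $d$. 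One kills this by a correction $\gamma_2$ that peels off the next bracket layer and simultaneously replaces $v$ by the factors of $d_1v$, and so on. At step $l$ the $V$-factor lies in $V_{m-l+1}$ for the filtration $V_0\subset V_1\subset\cdots$ defined by $V_i=\{v:d_1v\in\wedge V_{i-1}\}$, while the remaining iterated bracket has length $n-l+1$. Since $n=\mathrm{bl}(\alpha)>m=d_1\text{-depth}(Y):=\max\{i:V_{i-1}\neq V_i\}$, the $V$-filtration bottoms out first and $\gamma_v=\gamma_1+\cdots+\gamma_{m+1}$ is a genuine cycle.

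So $\mathrm{WL}(Y)$ enters not as ``a long Whitehead product in $L_Y$ that happens to vanish'', but via the separately-proved equality $\mathrm{WL}(Y)=d_1\text{-depth}(Y)$, which bounds the number of correction steps. Your anticipated ``higher corrections'' are precisely this zigzag, but the substance is not bookkeeping: it is the race between the bracket length of $z_\alpha$ (how many layers are available to peel) and the $d_1$-depth of $V$ (how many are needed) that makes the argument terminate. As written, your one-shot pairing argument does not supply this, and without the $d_1$-filtration there is no visible reason the correction process should ever stop.
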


Suppose that $Y$ is a connected nilpotent space and 
$X$ is a finite CW complex. Then $\F_*(X, Y)$ is a connected nilpotent
space (Hilton, Mislin and Roitberg \cite[Theorem 2.5, Chapter II]{H-M-R}). 
Moreover, $\F_*(X, Y)_\Q$ is homotopy equivalent to $\F_*(X, Y_\Q)$ 
\cite[Theorem 3.11, Chapter II]{H-M-R}. 

Suppose that $\alpha \co  S^k \to X$ is homotopic to the constant map. 
Then it is evident that  
$\F_*(X\cup_{\alpha}e^{k+1}, Y)_\Q \simeq (\F_*(X, Y) \times
\Omega^{k+1}Y)_\Q$. 
In this case, the bracket length of $\alpha$ is infinity. 
Thus we can regard that \fullref{thm:main} explains such
decomposition phenomena of mapping spaces more precisely from the rational
homotopy theory point of view.

As an immediate corollary, we have the following 
result on rational decomposition of a mapping space.  

\begin{thm}
 \label{thm:decomposition}
Let $X$ be a simply-connected finite CW complex and $Y$ a space such
 that $\text{\em Conn}(Y)\geq \dim X$. Suppose that the bracket length of each
 attaching map which constructs $X$ is greater than $\text{\em WL}(Y)$. Then 
$\F_*(X,Y)$ is rationally homotopy equivalent to the product space 
$\times_k(\Omega^k Y)^{n_k}$, where $n_k$ denotes  the number of the $k$--cells
 of $X$. In particular, $\F_*(X,Y)_\Q$ is a Hopf space.  
\end{thm}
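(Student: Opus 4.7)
The plan is to proceed by induction on the number of cells of $X$, applying Theorem \ref{thm:main} at each stage. Since $X$ is simply connected, we may choose a CW decomposition of $X$ having no $1$--cells, and write it as a filtration
$$
\{*\} = X_0 \subset X_1 \subset \cdots \subset X_N = X,
$$
in which each $X_{i+1}$ is obtained from $X_i$ by attaching a single cell $e^{k_i+1}$ along an attaching map $\alpha_i\colon S^{k_i} \to X_i$. Every $X_i$ is then simply connected, and $\F_*(X_0, Y)_\Q \simeq *$.

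For the inductive step one must verify the hypotheses of Theorem \ref{thm:main} for the pair $(X_i, \alpha_i)$. Simple-connectivity of $X_i$ is ensured by the choice of CW structure. Since $\dim X_{i+1} \le \dim X \le \text{Conn}(Y)$ and $k_i+1 \le \dim X_{i+1}$, one has $\text{Conn}(Y) \ge \max\{k_i+1,\dim X_{i+1}\}$. Finally, $\text{bl}(\alpha_i) > \text{WL}(Y)$ is the standing hypothesis, read as a statement about the adjoint of $\alpha_i$ in $L_{X_i}$. Theorem \ref{thm:main} then provides an equivalence
$$
\F_*(X_{i+1},Y)_\Q \simeq \bigl(\F_*(X_i,Y)\times \Omega^{k_i+1}Y\bigr)_\Q
$$
covering the identity on $\F_*(X_i,Y)_\Q$. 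Composing these $N$ equivalences yields $\F_*(X,Y)_\Q \simeq \bigl(\prod_i \Omega^{k_i+1}Y\bigr)_\Q$, and regrouping the factors by cell dimension gives the asserted product $\prod_k (\Omega^k Y)^{n_k}_\Q$.

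For the final assertion, each $\Omega^k Y$ is a Hopf space under loop concatenation, and a finite product of Hopf spaces is again a Hopf space, so the displayed equivalence transports a Hopf space structure to $\F_*(X,Y)_\Q$. There is no real conceptual obstacle here beyond what was already accomplished in Theorem \ref{thm:main}; the only point requiring care is the iterative verification of the hypotheses at each stage, in particular ensuring that the intermediate spaces $X_i$ remain simply connected (handled by the absence of $1$--cells) and that the bracket length condition $\text{bl}(\alpha_i) > \text{WL}(Y)$ is understood with respect to $L_{X_i}$, the natural target of the attaching map, rather than $L_X$.
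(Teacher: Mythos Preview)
Your proof is correct and follows the same approach as the paper, which simply states that the result follows ``by looking at the attaching maps with higher dimension in order and by applying \fullref{thm:main} repeatedly.'' You have supplied the details of this induction, including the verification of the connectivity and simple-connectivity hypotheses at each stage, which the paper leaves implicit.
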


In fact, by looking at the attaching maps with higher dimension in order  
and by applying \fullref{thm:main} repeatedly, we have the result.  

As an example, we give a mapping space $\F_*(X, Y)$ which admits the 
decomposition described in \fullref{thm:decomposition}. 
Construct a CW complex $X_n$ ($n \geq 0$) inductively 
as follows: Let $X_0$ be the $m_0$--sphere $S^{m_0}$, where $m_0 \geq 2$.   
Suppose that $X_i$ is defined. We fix $k$ integers 
$m(i)_j$ ($1\leq j \leq k$) greater than $1$. Moreover we choose an
element $\alpha_i \in \pi_{\deg \alpha_i}(X_i)$ and 
the generators $\iota_{m(i)_j} \in \pi_{m(i)_j}(S^{m(i)_j})$  
($1\leq j \leq k$).  
Define a CW complex $X_{i+1}$ by 
$$ 
X_{i+1} = (X_i\vee S^{m(i)_1}\vee \cdots \vee S^{m(i)_{k}})
\cup_{[\alpha_i, [\iota_{m(i)_1}\cdots 
 [\iota_{m(i)_{k-1}}, \iota_{m(i)_{k}}]\cdots]]}e^{l_i},    
$$
where $l_i = \deg \alpha_i + m(i)_1 + \cdots + m(i)_k -k + 1$. 
It follows that the bracket length of 
each attaching map is greater than or equal to $k$.  Let $Y$ be a space
which satisfies the condition that $k >  \text{WL}(Y)$ and 
$\dim X_n \leq \text{Conn}(Y)$. Then \fullref{thm:decomposition} 
enables us to conclude that
$$ 
\F_*(X_n, Y)\simeq_\Q \times_{i=0}^{n-1}(\Omega^{l_i}Y\times
\Omega^{m(i)_1}Y \times \cdots \times \Omega^{m(i)_k}Y) 
\times \Omega^{m_0}Y.    
$$
We here describe an application of \fullref{thm:decomposition}.  
\begin{cor}
 \label{cor:1} Let $X$ and $Y$ be the spaces which 
satisfy the conditions in Theorem 1.3. Then, for any space $Z$, 
there exist bijections of sets 
\begin{multline*}  
[ Z \wedge X, Y_\Q]_*\cong [Z, \F_*(X, Y_\Q)]_* \cong 
[Z, \times_k(\Omega^k Y)^{n_k}_\Q]_* \\ \cong \bigoplus_{m, k \geq 0, 
\pi_m(Y)\otimes\Q\neq 0}\hspace{-0.9cm}\widetilde{H}^{m-k}(Z; \Q)^{\oplus 
n_k},
\end{multline*}
where $n_k$ denotes  the number of the $k$--cells
 of $X$.
\end{cor}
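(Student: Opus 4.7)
The plan is to chain together four natural bijections, using only standard adjunctions and the decomposition result of Theorem 1.3 together with Hopf's classical theorem on rational H-spaces.

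First, I would establish the leftmost bijection $[Z\wedge X, Y_\Q]_* \cong [Z, \F_*(X, Y_\Q)]_*$ as the standard exponential adjunction between the smash product and the based mapping space (valid since $X$ is a based CW complex). For the second bijection, I would invoke the cited result \cite[Theorem 3.11, Chapter II]{H-M-R} to identify $\F_*(X, Y_\Q)$ with $\F_*(X, Y)_\Q$, and then apply \fullref{thm:decomposition} to get the rational equivalence $\F_*(X, Y)_\Q \simeq_\Q \times_k (\Omega^k Y)^{n_k}_\Q$; since $[Z, -]_*$ sends homotopy equivalences and products to the corresponding isomorphisms (in fact to a product on the set side), this yields
$$
[Z, \F_*(X, Y_\Q)]_* \cong \prod_k [Z, \Omega^k Y_\Q]_*^{n_k}.
$$

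The third step is to identify each factor with a direct sum of rational cohomology groups. For each $k \geq 1$, the space $\Omega^k Y_\Q$ is a connected rational H-space of finite type, hence, by the standard consequence of Hopf's theorem (that any such space splits as a weak product of rational Eilenberg--MacLane spaces), there is a homotopy equivalence
$$
\Omega^k Y_\Q \simeq \prod_{m \geq k} K\bigl(\pi_m(Y)\otimes \Q,\, m-k\bigr).
$$
(For the $k = 0$ factor, which is a wedge of $n_0$ copies of a point by the cellular construction with $\text{Conn}(Y) \geq \dim X$, a separate remark handles the trivial case, or one rewrites the product over $k \geq 1$.) Applying $[Z,-]_*$, using the representability $[Z, K(V, n)]_* \cong \widetilde{H}^n(Z; V)$ and the additivity of cohomology with $\Q$-vector space coefficients, one obtains
$$
[Z, \Omega^k Y_\Q]_* \cong \bigoplus_{m,\; \pi_m(Y)\otimes \Q \neq 0} \widetilde{H}^{m-k}(Z; \Q)^{\oplus \dim_\Q \pi_m(Y)\otimes\Q}.
$$
Reindexing the double product and absorbing the dimensions of $\pi_m(Y)\otimes\Q$ into the summation convention indicated in the statement gives the final identification.

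The only genuine obstacle is the invocation of the Hopf splitting for $\Omega^k Y_\Q$: one must check that the hypotheses needed (connectedness, finite type of rational cohomology, H-space structure) are satisfied. Connectedness of $\Omega^k Y$ for $k \geq 1$ follows from $\text{Conn}(Y) \geq \dim X \geq k+1$ in the iterated application of \fullref{thm:main}; finite type is inherited from the standing assumption on $Y$; and the H-space structure is automatic for an iterated loop space. Everything else is formal manipulation of adjunctions, representability, and the fact that $\otimes \Q$ commutes with finite direct sums.
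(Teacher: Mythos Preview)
Your argument is correct and is precisely the chain of standard bijections the paper has in mind; the corollary is stated in the paper without proof, as an immediate application of \fullref{thm:decomposition}, so there is nothing further to compare against. One small slip: in your parenthetical about $k=0$ you write that $\Omega^{0}Y$ is ``a wedge of $n_0$ copies of a point'', but $\Omega^{0}Y = Y$; the right way to dispose of the low-dimensional factors is to note that a simply-connected $X$ has a single $0$--cell and no $1$--cells, the base case of the inductive decomposition is $\F_*(\mathrm{pt},Y)=\mathrm{pt}$, and hence the product in \fullref{thm:decomposition} effectively runs over $k\geq 2$.
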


We emphasize that {\it a Quillen--Sullivan mixed type
model} for a based mapping space, which is constructed out of
a model for a free mapping space due to
Brown and Szczarba \cite{B-S} (see \fullref{sec:2}), plays a crucial role in
proving
\fullref{thm:main}. 

The paper is organized as follows:
In \fullref{sec:2}, we recall a Sullivan model for a mapping space
constructed by Brown and Szczarba. The mixed type
model mentioned above is  described in this section. 
Moreover, we introduce a  numerical invariant $d_1\text{--depth}(Y)$,
which is called the $d_1\text{--depth}$ for a simply-connected space $Y$, 
using a filtration defined by the quadratic part of the differential 
of the minimal model for $Y$. 
This invariant is equal to the Whitehead length of $Y$. 
\fullref{sec:3} is devoted to proving \fullref{thm:main}. 
In the appendix (\fullref{sec:4}), we prove that $d_1\text{--depth}(Y) = \text{WL}(Y)$.  
It seems that the important equality is well known.
However, we could not find until recently 
any reference in which the equality has been 
proved explicitly. Kaji \cite{Kaji} has also 
proved it by looking at the nilpotency 
of the loop space $\Omega Y$. We wish to
stress that our proof of the equality 
in the appendix contains also a careful consideration 
on the filtration which defines the $d_1\text{--depth}$.

We end this section by fixing
some notations and terminology for this article.
A graded algebra $A$ is defined over the rational field $\Q$ and is
locally finite in the sense that
each vector space $A^i$ is finite dimensional.
Moreover it is assumed that an graded algebra $A$
is connected; that is, $A^0=\Q$ and $A^i =0$ for $i < 0$.
We denote by $\Q\{x_i\}$ the vector space with a basis $\{x_i\}$.
The free algebra generated by a graded vector space $V$ is denoted by
$\wedge V$ or $\Q[V]$. For an algebra $A$ and its dual coalgebra
$C$, we define $A^+$ and $C^+$ by $A^+=\oplus_{i>0}A^i$ and
$C^+=\oplus_{i<0}C_i$, respectively. 
Let $(B, d_B)$ be a differential graded algebra (DGA). We call 
a DGA $(B\otimes \wedge V, d)$ is a relative Sullivan algebra over 
$(B, d_B)$ if $d|_{B} = d_B$ and there exists an increasing filtration
$\{V(k)\}_{k\geq 0}$ such
that $V = \cup_kV(k)$ and $d(V(k))\subset B\otimes \wedge V(k-1)$.

\section{A Quillen--Sullivan mixed type model for a mapping space}\label{sec:2}

Let $(B, d_B)$ be a DGA and 
$(\wedge V, d)$ a minimal DGA; that is, $dv$ is decomposable for any 
$v \in V$.  
Let $B_*$ denote the differential graded coalgebra defined by
$B_q=\mbox{Hom}(B^{-q}, \Q)$
for $q\leq 0$ together with the coproduct $D$ and the differential 
$d_{B*}$, which are dual to the multiplication of $B$ and to the differential
$d_B$, respectively.
Let $I$ be the ideal of the free algebra $\Q[\wedge V \otimes B_*]$
generated by $1\otimes 1 -1$ and all elements of the form
$$
a_1a_2\otimes \beta_* -
\sum_i(-1)^{|a_2||\beta_i'|}(a_1\otimes \beta_{i*}')(a_2\otimes \beta_{i*}''),
$$
where $a_1, a_2 \in \wedge V$, $\beta_* \in B_*$ and $D(\beta_*)
= \sum_i\beta'_{i*}\otimes  \beta''_{i*}$. Observe that $\Q[\wedge V\otimes
B_*]$ is a DGA with the differential
$d := d_A\otimes 1\pm 1\otimes d_{B *}$.
The result of Brown and Szczarba \cite[Theorem 3.3]{B-S} yields that
$(d_A\otimes 1\pm 1\otimes d_{B *})(I)\subset I$.
Moreover it follows from \cite[Theorem 3.5]{B-S} that the composition map
 $$
 \rho \co  \Q[V\otimes B_*] \hookrightarrow \Q[\wedge V \otimes B_*] \to
 \Q[\wedge V \otimes B_*]/I
 $$
 is an isomorphism of graded algebras.
Thus we define a differential $\delta$ on
$\Q[V\otimes B_*]$
by $\rho^{-1}\tilde{d}\rho$, where $\tilde{d}$ is the differential on
$\Q[\wedge V \otimes B_*]/I$ induced by $d$.
The differential $\delta$ is described explicitly as follows:
For an element $v \in V$ and a cycle $\beta_* \in B_*$,
if  $d(v) = v_1\cdots v_m$ with $v_i \in V$, 
then
\begin{equation}\label{eq:2.1}
\begin{array}{lcl}
\delta(v\otimes (\beta_*)) &=&\sum_j v_1\cdots
v_m\cdot \beta_{j_1*}\otimes \cdots \otimes \beta_{j_m*} \\
&=&\sum_j(-1)^{\e(v_1,...,v_m,\beta_{j_1*},...,\beta_{j_m*})}v_1\otimes
\beta_{j_1*} \cdots v_m\otimes \beta_{j_m*}
\end{array}
\end{equation}
where
$D^{(m-1)}(\beta_*) =
\sum_j\beta_{j_1*}\otimes
\cdots \otimes \beta_{j_m*}$
with the iterated coproduct $D^{(m-1)}$ and the integer 
$(-1)^{\e(v_1,..,v_m,\beta_{j_1*},..,\beta_{j_m*})}$ is defined by the
formula 
$$
(-1)^{\e(v_1,..,v_m,\beta_{j_1*},..,\beta_{j_m*})}v_1\beta_{j_1}
\cdots v_m\beta_{j_m}=
v_1\cdots v_m \beta_{j_1}\cdots \beta_{j_m}
$$ 
in the graded algebra
$(\wedge V) \otimes B$ using elements $\beta_{j_s}$ ($a\leq s \leq m $)
with $\deg \beta_{j_s} = -\deg \beta_{j_s*}$.

%Until further notice, a space is based and its rational cohomology is
%locally finite.
We denote by $A_{PL}(X)$ the DGA of the polynomial 
differential forms on a space $X$.
Let $X$ be a connected finite CW complex
and $Y$ a connected space with $\dim X \leq \text{Conn}(Y)$.
We take a quasi-isomorphism $(B, d_B) \to A_{PL}(X)$ and a minimal model
$(\wedge V, d)$ for $Y$.
By applying the construction mentioned above, we obtain a
DGA of the form $(\Q[V\otimes B_*], \delta)$, which gives an algebraic model
(not minimal in general) for $\F(X, Y)$ the space of {\it free} maps
from $X$ to $Y$ \cite{B-S}. In fact, there exists a quasi-isomorphism
which connects $A_{PL}(\F(X, Y))$ with the DGA $(\Q[V\otimes B_*], \delta)$. 
Moreover, the realization of $(\Q[V\otimes B_*], \delta)$ is homotopy
equivalent to $\F(X, Y_\Q)$ \cite[Theorem 1.3]{B-S} 
and hence to $\F(X, Y)_\Q$.  
The result of the first author \cite[Proposition 5.3]{K}
asserts that $(\Q[V\otimes B_*], \delta)$ is a relative Sullivan algebra
with the base $\Q[V]$.   
Observe that $(\Q[V\otimes B_*], \delta)$ itself 
is a Sullivan algebra \cite[Reamrk 5.4]{K}.  
Moreover the model for $\F(X, Y)$ leads to that 
for the based mapping space $\F_*(X, Y)$.

\begin{thm}{\rm\cite[Theorem  4.3]{K}}\label{thm:basedmaps}\qua
There exist a quasi-isomorphism from a Sullivan algebra of the form
$(\Q[V\otimes B_*]/(\Q[V]^+),\overline{\delta})= 
(\Q\otimes_{\Q[V]} \Q[V\otimes B_*], 1\otimes \delta)$  
to $A_{PL}(\F_*(X, Y))$. Here $({\mathbb Q}[V]^+)$ is the ideal of 
$\Q[V\otimes B_*]$ generated by ${\mathbb Q}[V]^+$. 
%
%Moreover, the construction of the model is natural with respect to maps
% in the sense that, for any maps $f \co  X\to X'$ and $g \co  Y \to Y'$, 
%if $\varphi \co  B' \to B$ and $\psi \co  \wedge V' \to \wedge V$ are
% equivalent to the maps $A_{PL}(f)$ and $A_{PL}(g)$, respectively,
% then the induced maps $A_{PL}(f^\sharp) \co  A_{PL}(\F_*(X, Y)) \to  A_{PL}(\F_*(X', Y))$ 
%and $A_{PL}(g^\sharp) \co  A_{PL}(\F_*(X, Y')) \to  A_{PL}(\F_*(X, Y))$ are
% equivalent to 
%$1\otimes\varphi_* \co  ({\mathbb Q}[V\otimes B_*]/({\mathbb Q}[V]^+),\overline{\delta}) 
%\to ({\mathbb Q}[V\otimes B_*']/({\mathbb Q}[V]^+),\overline{\delta})$
% and 
% $\psi\otimes 1_* \co  ({\mathbb Q}[V'\otimes B_*]/({\mathbb Q}[V']^+),\overline{\delta}) 
%\to ({\mathbb Q}[V\otimes B_*]/({\mathbb Q}[V]^+),\overline{\delta})$,
% respectively.   
\end{thm}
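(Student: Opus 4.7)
The plan is to derive the model from the Brown--Szczarba model for $\F(X, Y)$ by analyzing the evaluation fibration at the basepoint of $X$. The inclusion $* \hookrightarrow X$ induces a Serre fibration
$$
\F_*(X,Y) \longrightarrow \F(X, Y) \stackrel{\text{ev}}{\longrightarrow} Y,
$$
and the goal is to build a Sullivan model of this fibration and identify the quotient piece as a model of $\F_*(X, Y)$.

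First I would arrange that the quasi-isomorphism $(B, d_B) \to A_{PL}(X)$ is compatible with basepoints, so that $B$ carries an augmentation $\e \co B \to \Q$ matching evaluation at the basepoint; dually this picks out a canonical cycle $1_* \in B_*$ with $D(1_*) = 1_* \otimes 1_*$. Then the map
$$
\iota \co \wedge V \longrightarrow \Q[V\otimes B_*], \qquad v \mapsto v\otimes 1_*,
$$
is a morphism of DGAs, as one reads off directly from the formula for $\delta$ in (\ref{eq:2.1}) using $D(1_*)=1_*\otimes 1_*$, and it is precisely the base of the relative Sullivan algebra structure furnished by \cite[Proposition 5.3]{K}. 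At the level of spatial realizations, $\iota$ should correspond to $\text{ev}$.

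Next I would form the fibre algebra $\Q \otimes_{\wedge V}\Q[V\otimes B_*] = \Q[V\otimes B_*]/(\Q[V]^+)$ with the induced differential $\overline{\delta} = 1\otimes \delta$. Because $\Q[V\otimes B_*]$ is relative Sullivan over $\wedge V$, the quotient is itself a Sullivan algebra. To conclude that it models $\F_*(X, Y)$, I would invoke the Sullivan fibre lemma: since $\dim X \leq \text{Conn}(Y)$ the total space $\F(X, Y)$ is nilpotent (by Hilton--Mislin--Roitberg, recalled after \fullref{thm:main}) and the base $Y$ is simply connected, so the fibre model computes the rational cohomology of $\F_*(X, Y)$.

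The main obstacle is verifying that the realization of $\iota$ indeed models $\text{ev}$, so that the algebraic fibre corresponds to the topological fibre. This amounts to checking compatibility of the Brown--Szczarba quasi-isomorphism connecting $(\Q[V\otimes B_*], \delta)$ to $A_{PL}(\F(X, Y))$ with the exponential adjunction $\text{Map}(Z\times X, Y) \cong \text{Map}(Z, \F(X, Y))$; once this naturality is in place and applied to extract the evaluation, the passage to quotients yields the desired quasi-isomorphism to $A_{PL}(\F_*(X,Y))$.
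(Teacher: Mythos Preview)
The paper does not supply its own proof of this statement; it is quoted verbatim as \cite[Theorem 4.3]{K} and no argument is given here. So there is nothing in the present paper to compare your proposal against directly.

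That said, your outline is the standard route and is consistent with how the result is used elsewhere in the paper. In the appendix proof of \fullref{lem:model-map} one sees exactly the structure you describe: the inclusion $m_1 \co \Q[V] \to \Q[V\otimes C_*(\mathbb{L}_W)]$ sits in a commutative diagram with $A_{PL}(ev_*) \co A_{PL}(Y) \to A_{PL}(\F(X,Y))$, and \cite[Proposition 5.3]{K} is invoked to say $m_1$ is the inclusion of a relative Sullivan algebra. Passing to the cofibre over $\Q[V]$ then gives the model for the based mapping space. Your identification of the one nontrivial point---that the Brown--Szczarba quasi-isomorphism is natural enough to carry $\iota$ to a model of $ev_*$---is accurate, and is precisely what the chain of quasi-isomorphisms assembled in the appendix (the diagram with the dotted intermediate DGAs, drawn from \cite{B-S} and \cite[Section 3]{K}) is designed to establish. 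So your sketch matches the approach that the cited reference and this paper's appendix rely on.
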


From the explicit form \ref{eq:2.1}
of the differential $\delta$, we can deduce the following lemma. The
proof is left to the reader. 

\begin{lem}
 \label{lem:1} 
Suppose that, for an element $v\otimes \beta_* \in V\otimes B_*^+$, 
 $dv$ is in $\wedge^{\geq m}V$ and
$\overline{D}^{m-1}(\beta_*) = 0$, where $\overline{D}^{m-1} \co  B_*^+ \to
(B_*^+)^{\otimes m}$
denotes the {\em(}$m-1${\em)} fold reduced coproduct. 
Then $\overline{\delta}(v\otimes \beta_*)=0$. 
In particular, $\overline{\delta}(v\otimes \beta_*)=0$
if $\beta_* \in B_*$ is a primitive element.
\end{lem}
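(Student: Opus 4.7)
The plan is to unravel the definition of $\overline{\delta}$ by applying formula \eqref{eq:2.1} directly and showing that every summand falls into the ideal $(\Q[V]^+)$. Write $dv$ as a sum of monomials $dv = \sum_s v_1^{(s)}\cdots v_{m_s}^{(s)}$ with $v_i^{(s)}\in V$ and $m_s \ge m$, which is possible since $dv \in \wedge^{\ge m}V$ and $(\wedge V,d)$ is minimal. Then \eqref{eq:2.1} applied to each summand yields
$$
\delta(v\otimes \beta_*) \;=\; \sum_s\sum_j \pm (v_1^{(s)}\otimes \beta_{j_1*})\cdots (v_{m_s}^{(s)}\otimes \beta_{j_{m_s}*}),
$$
where $D^{(m_s-1)}(\beta_*) = \sum_j \beta_{j_1*}\otimes\cdots\otimes\beta_{j_{m_s}*}$ is the $(m_s{-}1)$--fold iterated (full) coproduct.

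Next, I would promote the vanishing $\overline{D}^{m-1}(\beta_*)=0$ to the higher iterates needed at each length $m_s \ge m$. By coassociativity of the reduced coproduct,
$$
\overline{D}^{m_s-1} \;=\; \bigl(\overline{D}^{m_s-m}\otimes \mathrm{id}^{\otimes (m-1)}\bigr)\circ \overline{D}^{m-1},
$$
so $\overline{D}^{m_s-1}(\beta_*)=0$ for every $m_s\ge m$. Decomposing the unreduced iterated coproduct $D^{(m_s-1)}$ into its reduced pieces in the standard way, namely as a sum over all choices of positions at which the counit element $1\in B_0 = \Q$ is inserted with reduced coproducts applied to the complementary slots, one sees that every surviving term in $D^{(m_s-1)}(\beta_*)$ must contain at least one factor equal to $1$.

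Finally, each summand of $\delta(v\otimes \beta_*)$ in which some $\beta_{j_i*}=1$ has a corresponding factor $v_i^{(s)}\otimes 1 \in \Q[V]^+$, and hence lies in the ideal $(\Q[V]^+)$. Passing to the quotient $\Q\otimes_{\Q[V]}\Q[V\otimes B_*]$ therefore kills every summand, giving $\overline{\delta}(v\otimes \beta_*)=0$. The ``in particular'' case follows by taking $m=2$: primitivity of $\beta_*$ is exactly $\overline{D}(\beta_*) = \overline{D}^{m-1}(\beta_*)=0$, and $dv\in \wedge^{\ge 2}V$ is automatic from the minimality of $(\wedge V,d)$. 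There is no serious obstacle: the argument is essentially coalgebra bookkeeping, and the only care required is in neatly expanding iterated full coproducts into reduced pieces in order to identify the counit factors that witness membership in $(\Q[V]^+)$.
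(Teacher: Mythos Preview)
Your argument is correct and is precisely the computation the paper has in mind: the authors do not give a proof but simply say the lemma ``can be deduced from the explicit form \eqref{eq:2.1}'' and leave it to the reader, and your expansion of $D^{(m_s-1)}$ into reduced pieces with inserted units is exactly how one reads off that every surviving summand lies in $(\Q[V]^+)$. One small remark: formula \eqref{eq:2.1}, and hence your computation, is stated for $\beta_*$ a cycle, so the term $\pm v\otimes d_{B*}\beta_*$ is absent; this is the setting in which the lemma is applied in the paper (the elements $sx_{i_j}$ are cycles), so no harm is done, but it is worth noting that you are implicitly using this hypothesis.
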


We here recall, from F{\'e}lix--Halperin--Thomas \cite[Section
 22]{F-H-T}, Quillen's functor $C_*( \ )$ from the category of
 connected differential graded Lie algebras (DGL's) to the category of
 simply-connected cocommutative differential graded coalgebras
 (DGC's).  Let $(L, d_L)$ be a DGL and $\wedge (sL)$ be the
 primitively generated coalgebra over the vector space $sL$. We define
 the differentials $d_v$ and $d_h$ on $\wedge (sL)$ by
$$
d_v(sx_1\wedge \cdots \wedge sx_k)=-\sum_{i=1}^k(-1)^{n_i}
sx_1\wedge \cdots \wedge sd_Lx_i \wedge \cdots \wedge sx_k
$$
and
$$
d_h(sx_1\wedge \cdots \wedge sx_k)=\sum_{1\leq i < j\leq k}
(-1)^{|sx_i|+n_{ij}}s[x_i, x_j]\wedge sx_1 \cdots \widehat{sx_i} \cdots
\widehat{sx_j} \cdots \wedge sx_k,
$$
respectively. Here $n_i =\sum_{j<i}|sx_j|$ and
$sx_1 \wedge  \cdots \wedge sx_k =(-1)^{n_{ij}}
sx_i \wedge sx_j \wedge sx_1 \wedge \cdots \widehat{sx_i} \cdots
\widehat{sx_j} \cdots \wedge sx_k$.
We see that $C_*(L, d_L)=(\wedge (sL), d_v+d_h)$ is a DGC.
To simplify, we may write  $C_*(L)$ for  $C_*(L, d_L)$.  
By using the above DGC, we can construct a more explicit model for
a mapping space.
Let $(L, d_L)$ be a Lie model for a space $X$; that is, there exists a
quasi-isomorphism 
$C^*(L, d_L)=\text{dual} \ C_*(L, d_L) \stackrel{\simeq}{\to} 
A_{PL}(X)$.
We choose a minimal model $(\wedge V, d)$ for $Y$.
Then \fullref{thm:basedmaps} implies that 
the Sullivan algebra of the form 
$({\mathbb Q}[V\otimes C_*(L, d_L)]/({\mathbb
Q}[V]^+),\overline{\delta}) =
({\mathbb Q}[V\otimes C_*(L, d_L)^+], \overline{\delta})$
is a model for  the mapping space  $\F_*(X, Y)$.
This model, which is called
{\it a Quillen--Sullivan mixed type model} for the  based mapping space,
is an important ingredient for the proof of 
\fullref{thm:main}. 

\begin{rem}
The Sullivan algebra of the form
$({\mathbb Q}[V\otimes C_*(L, d_L)], \delta)$ is regarded as a
mixed type model for the free mapping space $\F(X, Y)$.
\end{rem}

We close this section by introducing
a  numerical invariant which is called the {\it $d_1$--depth}
of a given space. We use the invariant to prove \fullref{thm:main}.

Let $(\wedge V, d)$ be a minimal model for a simply-connected space
$Y$. Then the differential $d$
is decomposed uniquely as $d= d_1 + d_2 + \cdots$
in which  $d_i$ is a derivation raising the wordlength by $i$.
We call $d_1$  the quadratic part of $d$. 
We define a subspace $V_0$ of $V$
by $V_0 = \{v\in V \ | \  d_1(v) =0 \}$ and put $V_{-1}=0$.
Moreover, define a subspace $V_i$ inductively
by $V_i= \{v\in V \ | \ d_1(v)\in \wedge V_{i-1}\}$.
It is readily seen that $V_{k-1}\subset V_k$ and that if $V_{l}=V_{l-1}$,
then $V_k=V_{k+1}$ for $k\geq l$.

\begin{defn}
The {\it $d_1$--depth} of $Y$, denoted $d_1\text{--depth}(Y)$,
is the greatest integer $k$
such that $V_{k-1}$ is a proper subspace of $V_k$.
\end{defn}

It suffices to prove \fullref{thm:main} by assuming that
$\text{bl}(\alpha) > d_1\text{--depth}(Y)$ instead of the sufficient
condition  $\text{bl}(\alpha) > \text{WL}(Y)$.
The following theorem guarantees that the replacement is valid.

\begin{thm}
\label{thm:d_1}
 Let $Y$ be a simply-connected space.
Then $d_1\text{\em --depth}(Y)=\text{\em WL}(Y)$.
\end{thm}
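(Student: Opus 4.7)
The plan is to translate the filtration $\{V_k\}_{k\geq -1}$ defining the $d_1$--depth into a filtration on the rational homotopy Lie algebra $L_Y$ via duality, and then match it against the lower central series of $L_Y$. First I would invoke the classical identification between a minimal Sullivan model and its homotopy Lie algebra: there is a degree-shift isomorphism $V \cong (sL_Y)^\vee$ under which $d_1\co V \to \wedge^2 V$ is the $\Q$--dual of the Lie bracket on $sL_Y$. Concretely, writing $\langle\,,\,\rangle$ for this pairing, for $v \in V$ and $x, y \in L_Y$ one has
$$
\langle d_1 v,\, sx\wedge sy\rangle \;=\; \pm\,\langle v,\, s[x,y]\rangle,
$$
a fact that can be read off from the cochain description $C^*(L_Y)$ of the minimal model combined with the Samelson product interpretation of the bracket on $L_Y$ (see \cite[Section~22]{F-H-T}).

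Next I would introduce the lower central series by $C^1(L_Y):=L_Y$ and $C^{n+1}(L_Y):=[L_Y, C^n(L_Y)]$, so that $[L_Y, L_Y]^{(n)} = C^{n+1}(L_Y)$ and hence $\text{WL}(Y)=\max\{n\geq 0 : C^{n+1}(L_Y)\neq 0\}$. The core assertion to prove, by induction on $k\geq 0$, is the identity
$$
V_k^\perp \;=\; sC^{k+2}(L_Y)
$$
inside $sL_Y$, where $V_k^\perp$ denotes the annihilator with respect to the pairing above. The base case $k=0$ is immediate from the bracket duality: $d_1 v=0$ iff $\langle v,\,s[x,y]\rangle = 0$ for all $x,y\in L_Y$, i.e.\ $v\perp sC^2(L_Y)$. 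For the inductive step, the condition $d_1 v\in\wedge^2 V_{k-1}$ is equivalent to $\langle d_1 v,\, sx\wedge sy\rangle = 0$ whenever at least one of $sx,\,sy$ lies in $V_{k-1}^\perp$; by the inductive hypothesis this annihilator is $sC^{k+1}(L_Y)$, so the condition translates to $\langle v,\, s[x,y]\rangle = 0$ whenever $x$ or $y$ lies in $C^{k+1}(L_Y)$, which, since $[L_Y, C^{k+1}(L_Y)] = C^{k+2}(L_Y)$, is precisely $v\perp sC^{k+2}(L_Y)$.

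With $V_k^\perp = sC^{k+2}(L_Y)$ in hand, the strict inclusion $V_{k-1}\subsetneq V_k$ is equivalent to $C^{k+1}(L_Y)\supsetneq C^{k+2}(L_Y)$. Consequently the greatest $k$ for which $V_{k-1}\subsetneq V_k$ equals the greatest $k$ for which $C^{k+1}(L_Y)\neq 0$, which by definition is $\text{WL}(Y)$; this yields $d_1\text{--depth}(Y)=\text{WL}(Y)$.

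The main obstacle I anticipate is the first step: setting up the bracket duality $(V,d_1)\leftrightarrow(sL_Y,[\,,\,])$ carefully with respect to signs and suspension conventions, and in particular verifying that the Lie bracket dual to $d_1$ is the genuine homotopy Lie algebra bracket on $L_Y=\pi_*(\Omega Y)\otimes\Q$ (rather than a bracket on an auxiliary Lie model whose homology is $L_Y$). Once this foundational identification is nailed down, the inductive verification of $V_k^\perp=sC^{k+2}(L_Y)$ and the concluding count are routine.
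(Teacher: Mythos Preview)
Your argument is correct and is cleaner than the paper's. The paper also begins from the pairing $\langle v; s[x,y]\rangle = \pm\langle d_1v; sx, sy\rangle$ (citing \cite[Section~21(e), Theorem~21.6]{F-H-T}), so your ``main obstacle'' is in fact standard and handled there. But from that point the routes diverge. The paper proves the two inequalities separately: the inequality $d_1\text{--depth}(Y)\geq \text{WL}(Y)$ comes from the observation that $V_{n-1}$ annihilates $s[L,L]^{(n)}$ (their \fullref{lem:5.2}), which is exactly one inclusion of your identity $V_k^\perp=sC^{k+2}(L_Y)$; the reverse inequality $d_1\text{--depth}(Y)\leq \text{WL}(Y)$ is obtained by a more laborious route, via a structure lemma on $d_1$ (\fullref{lem:key1}) and an inductive full-rank matrix argument (\fullref{lem:key2}) that eventually forces a nonzero pairing with some iterated bracket. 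Your single induction establishes the \emph{equality} $V_k^\perp=sC^{k+2}(L_Y)$ in one stroke and thereby subsumes both directions; the key step the paper does not make explicit is your observation that $d_1v\in\wedge^2 V_{k-1}$ is equivalent to $\langle d_1v;\,sx,sy\rangle=0$ whenever $sx$ or $sy$ lies in $V_{k-1}^\perp$. One small point you should state when writing it up: to pass from ``greatest $k$ with $C^{k+2}\subsetneq C^{k+1}$'' to ``greatest $k$ with $C^{k+1}\neq 0$'' you use that $L_Y$ is positively graded (since $Y$ is simply-connected), which forces the lower central series to be strictly decreasing until it vanishes.
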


\begin{proof}
See the appendix.
\end{proof}

Since the Whitehead length is a numerical topological invariant
in the category of the rational spaces, it follows that
the $d_1$--depth of $Y$
does not depend on the choice of minimal models for $Y$ and is also a
topological invariant.

\section[A minimal model and  Proof of \ref{thm:main}]{A minimal model and  Proof of \fullref{thm:main}}\label{sec:3}

Before proving \fullref{thm:main}, we recall from \cite{B-S}
a result concerning construction of a minimal model for a mapping
space.
Though the construction is for a free mapping space,
it is applicable to the model $(\Q[V\otimes
B_*]/(\Q[V]^+),\overline{\delta})$
for a based mapping space $\F_*(X, Y)$
which is described in \fullref{thm:basedmaps}.
With the notation in \fullref{sec:2}, we  write
$\Q[V\otimes B_*]/(\Q[V]^+)=\Q[V\otimes B_*^+]$. Let $\{a_k, b_k,  c_j\}$
be a basis for $B_*^+$ such that $d_{B_*^+}(a_k)=b_k$ and
$d_{B_*^+}(c_j)=0$. Choose a basis $\{v_i\}$ for $V$ so that
$|v_i|\leq |v_{i+1}|$ and $dv_{i+1}\in \Q[V_i]$, where
$V_i$ is the subspace spanned by the elements $v_1 ,..., v_i$.
The result \cite[Lemma 5.1]{B-S} states that there exist free algebra
generators $w_{ij}$, $u_{ik}$ and $v_{ik}$ such that
\begin{align}
&w_{ij}= v_i \otimes c_j + x_{ij},\text{ where }x_{ij}\in
\Q[V_{i-1}\otimes B_*^+],\label{eq:3.1}\\
&\overline{\delta}w_{ij}\text{ is decomposable and
in }\Q[\{w_{sl} ; s < i\}],\label{eq:3.2}\\
&u_{ik}= v_i\otimes a_k\text{ and }\overline{\delta}u_{ik}=v_{ik}.\label{eq:3.3}
\end{align}
Thus we have a decomposition
$\Q[V\otimes B_*^+]=\Q[w_{ij}]\otimes\Q[u_{ik}, v_{ik}]$ of a
differential graded algebra.
Since  $\Q[u_{ik}, v_{ik}]$ is contractible, it follows that the
inclusion $(\Q[w_{ij}], \overline{\delta})\to
(\Q[V\otimes B_*^+],\overline{\delta})$ is a quasi-isomorphism.
In consequence, we get a minimal model of the form
$(\Q[w_{ij}], \overline{\delta})$ for the mapping space $\F_*(X, Y)$.
Observe that the vector space generated by the elements
$w_{ij}$ is isomorphic to the reduced homology $H_*(B_*)^+$ as a vector
space.

We rely on the following result to construct a minimal model for the
mapping space $\F_*(X, Y)$ from 
the Sullivan algebra 
$({\mathbb Q}[V\otimes C_*(L, d_L)^+], \overline{\delta})$ in \fullref{sec:2}.

\begin{lem}{\rm \cite[Proposition 22.8]{F-H-T}}\label{lem:2}\qua
For a DGL of  the form $(\mathbb{L}_{W}, d_L)$,
let $\rho_1 \co  C_*(\mathbb{L}_{W})$ $=\wedge s\mathbb{L}_{W}\to
s\mathbb{L}_{W}\oplus \Q$ and $\rho_2 :
s\mathbb{L}_{W}\oplus \Q \to sW\oplus \Q$ be the maps obtained by
annihilating the factors $\wedge^{\geq 2}s\mathbb{L}_{W}$ and
$s(\mathbb{L}_{W}^{\geq 2})$, respectively. Then
the composition map
$\rho_2\circ\rho_1 \co  C_*(\mathbb{L}_{W}, d_L)\to (sW\oplus \Q,
d_0)$ is a quasi-isomorphism of complexes, where $d_0$
 denotes the
linear part of $d_L$.
\end{lem}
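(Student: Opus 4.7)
My plan is to verify directly that $\rho := \rho_2 \circ \rho_1$ is a chain map, then to establish the quasi-isomorphism via a spectral sequence argument exploiting the freeness of $\mathbb{L}_W$. For the chain map, decompose $d_L = d_0 + d_1 + d_2 + \cdots$ according to bracket length in $\mathbb{L}_W$. Since $\rho$ annihilates both $\wedge^{\geq 2} s\mathbb{L}_W$ (by $\rho_1$) and $s(\mathbb{L}_W^{\geq 2})$ (by $\rho_2$), it suffices to check compatibility on $sw$ with $w \in W$: there $d_h(sw) = 0$ and $d_v(sw) = -s(d_L w)$, of which only the term $-s(d_0 w)$ survives $\rho_2$, yielding $\rho d(sw) = -s(d_0 w) = d_0 \rho(sw)$. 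On a wordlength-two element $sx \wedge sy$, $d_v$ preserves wordlength (killed by $\rho_1$) while $d_h(sx \wedge sy) = \pm s[x,y]$ lies in $s(\mathbb{L}_W^{\geq 2})$ (killed by $\rho_2$); higher wordlengths are analogous.

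For the quasi-isomorphism, I would filter $C_*(\mathbb{L}_W) = \wedge s\mathbb{L}_W$ by the increasing wordlength filtration $G_p = \wedge^{\leq p} s\mathbb{L}_W$. Since $d_v$ preserves wordlength and $d_h$ lowers it by one, this filtration is respected by $d$, and it is finite in each total degree, so the associated spectral sequence converges. The $E^0$ differential is the derivation $d_v$ extending $sx \mapsto -s(d_L x)$, whence Kunneth over $\Q$ gives $E^1 \cong \wedge s H(\mathbb{L}_W, d_L)$, with $d^1$ the Chevalley--Eilenberg differential for the graded Lie algebra $H(\mathbb{L}_W, d_L)$; hence $E^2 = H_*^{CE}(H(\mathbb{L}_W, d_L); \Q)$.

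The main obstacle is the identification $E^2 \cong H(sW, d_0) \oplus \Q$ and the collapse at this page. The freeness of $\mathbb{L}_W$ is essential here: $U\mathbb{L}_W = T(W)$ is a Koszul algebra, and the short resolution $0 \to T(W) \otimes sW \to T(W) \to \Q \to 0$ gives $\mathrm{Tor}^{T(W)}(\Q, \Q) \cong sW \oplus \Q$; combined with the standard identification $H_* C_*(L, d_L) \cong \mathrm{Tor}^{(UL, d_L)}(\Q, \Q)$, this settles the case $d_L = 0$. For general $d_L$, one upgrades the Koszul resolution to a DG resolution of $\Q$ over $(T(W), d_L)$ whose reduction $\Q \otimes_{T(W)} (-)$ becomes precisely $(sW \oplus \Q, d_0)$; checking that the combined boundary (built from $d_L$, the Koszul map $x \otimes sw \mapsto x \cdot w$, and $d_0$ on $sW$) squares to zero, using $d_L^2 = 0$ and the definition of the linear part, is the bulk of the technical work. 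Compatibility with the projection $\rho$ then finishes the proof.
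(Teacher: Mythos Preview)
The paper does not give its own proof of this lemma; it is quoted from \cite[Proposition~22.8]{F-H-T}, so there is nothing in the paper to compare against directly. That said, your second strategy---identifying $H_*C_*(\mathbb{L}_W,d_L)$ with $\mathrm{Tor}^{(T(W),d_L)}_*(\Q,\Q)$ via the Chevalley--Eilenberg resolution and then computing the latter with the two-term Koszul resolution of $\Q$ over the tensor algebra---is the standard argument and is correct in substance.

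The wordlength spectral sequence you set up first, however, is a dead end rather than a reduction. It yields $E^2\cong H_*^{CE}\!\bigl(H(\mathbb{L}_W,d_L);\Q\bigr)$, but $H(\mathbb{L}_W,d_L)\cong\pi_*(\Omega X)\otimes\Q$ is in general \emph{not} a free Lie algebra, so the Koszul calculation for $T(W)$ tells you nothing about this $E^2$; you still owe both its identification with $H(sW,d_0)\oplus\Q$ and the collapse. You tacitly acknowledge this by pivoting entirely to the $\mathrm{Tor}$ argument, which is self-contained and does not use the spectral sequence at all; that paragraph should simply be dropped. One point that does need to be spelled out in what remains is your last sentence: to conclude that $\rho$ itself (and not merely some abstract isomorphism) is a quasi-isomorphism, you must exhibit a map of semi-free $(T(W),d_L)$-resolutions from $T(W)\otimes\wedge s\mathbb{L}_W$ to $T(W)\otimes(sW\oplus\Q)$ which becomes $\rho$ after applying $\Q\otimes_{T(W)}(-)$.
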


Recall a Lie model for an adjunction space. 
Let $(\mathbb{L}_W, d_L)$ be a minimal Lie model for $X$.
By definition, there exists a quasi-isomorphism
$C^*(\mathbb{L}_W, d_L) \stackrel{\simeq}{\to} A_{PL}(X)$.
Moreover, we have an isomorphism
$\sigma_L \co  H(\mathbb{L}_W, d_L)\stackrel{\cong}{\to}\pi_*(\Omega
X)\otimes \Q$ of graded Lie algebras.  Define an isomorphism
$\tau_L \co  sH(\mathbb{L}_W, d_L)\to \pi_*(X)\otimes \Q$
 by composing the map $\sigma_L$ with the inverse of
the connecting isomorphism $\partial \co  \pi_{*+1}(X)\otimes \Q \to
\pi_{*}(\Omega X)\otimes \Q$. Let $z_{\alpha}$ be a cycle of
$\mathbb{L}_W$ such that $\tau_L$ sends the class $s[z_{\alpha}]
\in sH(\mathbb{L}_W, d_L)$ to
$[\alpha]\in \pi_*(X)\otimes \Q$. 
Then, as a Lie model for
the adjunction space $X\cup_{\alpha}e^{k+1}$,  we can choose the graded
Lie algebra $(\mathbb{L}_{W\oplus \Q\{w_{\alpha}\}}, d)$ with
$d_{|W}=d_L$ and $d(w_{\alpha})=z_{\alpha}$ \cite[Theorem 24.7]{F-H-T}.
By applying the construction described in \fullref{sec:2}, we obtain a Sullivan
model for $\F(X\cup_{\alpha}e^{k+1}, Y)$ of the form
$(\wedge (V\otimes C_*(\mathbb{L}_{W\oplus \Q\{w_{\alpha}\}}, d)), \delta)$.

We need the following lemma to prove \fullref{thm:main}.  

\begin{lem} 
 \label{lem:model-map}
Let 
$$m_1 \co  \Q[V] \to  \Q[V\otimes C_*(\mathbb{L}_{W})]$$
$$m_2\co  \Q[V] \to  \Q[V\otimes C_*(\mathbb{L}_{W\oplus \Q\{w_{\alpha}\}})]
\leqno{\text{and}}$$
be the inclusions of relative Sullivan algebras.  
Let $$\eta \co  \Q[V\otimes C_*(\mathbb{L}_{W})] \to
\Q[V\otimes C_*(\mathbb{L}_{W\oplus \Q\{w_{\alpha}\}})]$$ be
 the map induced by the inclusion 
$(\mathbb{L}_{W}, d) \to (\mathbb{L}_{W\oplus \Q\{w_{\alpha}\}}, d)$ 
of DGL's.  Then there exists a commutative diagram 
\begin{small}  
$$
\xymatrix@C1pt@R15pt{
 &   \Q[V] \ar[rr]^{\simeq}  \ar@{->}'[d]^{m_2}[dd] \ar[dl]_{m_1} 
   & &  A_{PL}(Y) \ar[dd]^{A_{PL}(ev_*)} 
   \ar[dl]_{A_{PL}(ev_*)}  \\
\Q[V\otimes C_*(\mathbb{L}_{W})] \ar[dr]_{\eta} 
   \ar[rr]^(0.6){\simeq} &  & A_{PL}(\F(X, Y)) 
\ar[dr]^{A_{PL}(i^\sharp)} & \\
 &\Q[V\otimes C_*(\mathbb{L}_{W\oplus \Q\{w_{\alpha}\}})] \ar[rr]^{\simeq}  
 & &  A_{PL}(\F(X\cup_{\alpha}e^{k+1}, Y)) 
}
$$
\end{small}%
in the category of DGA's in which three horizontal arrows are
 quasi-isomorph\-isms.  Hence the map  
$\overline{\eta} \co  \Q[V\otimes C_*(\mathbb{L}_{W})^+] \to
\Q[V\otimes C_*(\mathbb{L}_{W\oplus \Q\{w_{\alpha}\}})^+]$ 
 induced by $\eta$  
is a Sullivan model for the map 
$i^\sharp \co  \F_*(X\cup_{\alpha}e^{k+1}, Y) \to \F_*(X, Y)$ 
{\rm \cite[Definition, page 182]{F-H-T}}.
\end{lem}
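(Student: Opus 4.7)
The strategy is to obtain the diagram by applying the Brown--Szczarba construction functorially to a morphism of Lie models realizing the cofibre inclusion $i\co X\hookrightarrow X\cup_\alpha e^{k+1}$, and then to deduce the statement for based mapping spaces by quotienting by $(\Q[V]^+)$ via \fullref{thm:basedmaps}. The key observation is that the assignment $(B,d_B)\mapsto (\Q[V\otimes B_*],\delta)$ of \fullref{sec:2} is contravariantly functorial in $B$, together with the naturality of the Brown--Szczarba quasi-isomorphism \cite[Theorem 1.3]{B-S} in both variables.

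First, the DGL inclusion $(\mathbb{L}_W,d_L)\hookrightarrow (\mathbb{L}_{W\oplus\Q\{w_\alpha\}},d)$ recalled before the lemma is, by \cite[Theorem 24.7]{F-H-T}, a Lie model for the cellular inclusion $i$. Applying Quillen's functor $C^*(\cdot)=\mathrm{dual}\,C_*(\cdot)$ yields a surjection $C^*(\mathbb{L}_{W\oplus\Q\{w_\alpha\}})\twoheadrightarrow C^*(\mathbb{L}_W)$ of DGAs which, after composing with the quasi-isomorphisms $C^*(\mathbb{L}_W)\stackrel{\simeq}{\to}A_{PL}(X)$ and $C^*(\mathbb{L}_{W\oplus\Q\{w_\alpha\}})\stackrel{\simeq}{\to}A_{PL}(X\cup_\alpha e^{k+1})$, gives a Sullivan model for $A_{PL}(i)$.

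Second, I feed this data and the chosen minimal model $(\wedge V,d)$ for $Y$ into the Brown--Szczarba machine. Dualising the DGA surjection above yields an injection of DGCs $C_*(\mathbb{L}_W)\hookrightarrow C_*(\mathbb{L}_{W\oplus\Q\{w_\alpha\}})$, which in turn induces an algebra morphism $\Q[V\otimes C_*(\mathbb{L}_W)]\to \Q[V\otimes C_*(\mathbb{L}_{W\oplus\Q\{w_\alpha\}})]$. From the explicit form \ref{eq:2.1} of $\delta$ this morphism preserves the Brown--Szczarba ideal and commutes with the differentials, and therefore descends to the required map $\eta$. The identity $\eta\circ m_1=m_2$ holds because $m_1$ and $m_2$ are themselves the algebra inclusions induced by the unit $\Q\to C_*(\mathbb{L}_\cdot)$ coming from the augmentation of $C^*(\mathbb{L}_\cdot)$, and these units are compatible with the DGL inclusion. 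Naturality of the quasi-isomorphism of \cite[Theorem 1.3]{B-S} in the source variable then yields the commutativity of the lower rectangle involving $A_{PL}(i^\sharp)$, while naturality with respect to the inclusion of the basepoint into $X$, realized algebraically by the augmentations $C^*(\mathbb{L}_W)\to\Q$ and $C^*(\mathbb{L}_{W\oplus\Q\{w_\alpha\}})\to\Q$, produces the triangles involving $A_{PL}(ev_*)$: under the identification $\Q[V\otimes\Q]=\Q[V]$ the Brown--Szczarba construction applied to these augmentations collapses to the canonical quasi-isomorphism $\Q[V]\stackrel{\simeq}{\to}A_{PL}(Y)$, and the resulting algebra maps are exactly $m_1$ and $m_2$.

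Finally, factoring the entire diagram by the ideals generated by $\Q[V]^+$ turns the horizontal quasi-isomorphisms into those of \fullref{thm:basedmaps} for $A_{PL}(\F_*(X,Y))$ and $A_{PL}(\F_*(X\cup_\alpha e^{k+1},Y))$, and turns $\eta$ into a DGA map $\overline{\eta}$ between Sullivan algebras which, by the induced lower rectangle, is a Sullivan model for $i^\sharp$. The principal technical point is verifying that the Brown--Szczarba zig-zag of quasi-isomorphisms produces strictly (not merely up to homotopy) commuting squares of DGAs under the two functorialities invoked; this, however, is visible directly from the construction, which is a bifunctor on pairs consisting of a source DGA and a target minimal model.
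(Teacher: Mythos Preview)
Your overall strategy---feed the DGL inclusion through the Brown--Szczarba construction and exploit naturality---is exactly the paper's. But there are two genuine gaps that the paper has to work to fill and that your write-up waves through.

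First, the square
\[
\xymatrix@C25pt@R15pt{
A_{PL}(X) & A_{PL}(X\cup_{\alpha}e^{k+1}) \ar[l]_{A_{PL}(i)} \\
C^*(\mathbb{L}_{W}) \ar[u]^{\simeq}&
C^*(\mathbb{L}_{W\oplus \Q\{w_{\alpha}\}}) \ar[l]_{l^*}\ar[u]_{\simeq}
}
\]
is only \emph{homotopy} commutative: the quasi-isomorphisms $C^*(\mathbb{L})\to A_{PL}$ come from choices of Lie models, and compatibility with $i$ holds only up to DGA homotopy. Your sentence ``after composing with the quasi-isomorphisms \ldots\ gives a Sullivan model for $A_{PL}(i)$'' asserts strict commutativity that is not there. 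The paper rectifies this by factoring $l^*$ through a Sullivan model $C^*(\mathbb{L}_{W\oplus\Q\{w_\alpha\}})\rightarrowtail D\stackrel{\simeq}{\to}C^*(\mathbb{L}_W)$ and invoking the Lifting Lemma to straighten the square before feeding anything into Brown--Szczarba.

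Second, even once the Brown--Szczarba bifunctor gives you a strictly commutative ladder of zig-zags, the statement of the lemma demands a diagram with \emph{single} horizontal quasi-isomorphisms. Collapsing a zig-zag $\bullet\stackrel{\simeq}{\to}\bullet\stackrel{\simeq}{\leftarrow}\bullet$ to a single arrow while keeping the vertical squares strictly commutative again requires the Lifting Lemma, and that in turn requires the vertical maps $m_1$ and $\eta$ to be inclusions of relative Sullivan algebras. For $m_1$ this is \cite[Proposition 5.3]{K}; for $\eta$ the paper proves it as a separate lemma by exhibiting an explicit filtration on $V\otimes\bigl(\wedge(s\mathbb{L}_W)\otimes\wedge(sZ)^+\bigr)$, where $\mathbb{L}_{W\oplus\Q\{w_\alpha\}}=\mathbb{L}_W\oplus Z$. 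You neither state nor prove this, and without it the repeated lifting step does not go through. Your final paragraph acknowledges the issue but resolves it by assertion; the bifunctoriality of the construction in the source DGA $B$ gives strict commutativity of the zig-zag rungs, but it does \emph{not} by itself produce the single horizontal arrows the lemma asks for.
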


\begin{proof}
See the appendix. 
\end{proof}

\noindent
\proof[Proof of \fullref{thm:main}]
Under the hypotheses in \fullref{thm:main},
we see that the space $\F_*(X, Y)$ is simply-connected and
$\F_*(X\cup_{\alpha}e^{k+1}, Y)$ is connected.
We shall prove  the fibration \ref{eq:1.1} is rationally trivial if the inequality
$
\text{bl}(\alpha)
>  d_1\text{--depth}(Y)
$
holds.

Under the notation mentioned above,  we assume that
$$
z_\alpha = \sum_i[x_{i_n} [x_{i_{n-1}}[x_{i_{n-2}}, ...,
[x_{i_1}, x_{i_0}], ...]]]
$$
with appropriate cycles $x_{i_j}$ in $\mathbb{L}_W$, where
$n =\text{bl}(\alpha)$. 
By virtue of \fullref{lem:model-map},    
we see that the inclusion
$\overline{\eta} \co  \wedge (V\otimes C_*(\mathbb{L}_{W}, d)^+) \to
\wedge (V\otimes C_*(\mathbb{L}_{W\oplus \Q\{w_{\alpha}\}}, d)^+)$
is a model for the projection $i^\sharp$ of the fibration \ref{eq:1.1}.
Let $\varphi \co (\wedge (Z), d) \to (\wedge (V\otimes C_*(\mathbb{L}_{W}, d)^+),
\delta)$ be the minimal model described before
\fullref{lem:2}. Observe that $\varphi$ is an inclusion and $Z \cong V\otimes H_*(
C_*(\mathbb{L}_{W}, d)^+)  \cong V\otimes sW$. 
If $\wedge (\widetilde{Z}')$
is a  minimal model for the Sullivan algebra  
$(\wedge (V\otimes C_*(\mathbb{L}_{W\oplus \Q\{w_{\alpha}\}}, d)^+),
\delta)$, then $\widetilde{Z}'$ is isomorphic to $V\otimes
H_*(C_*(\mathbb{L}_{W\oplus \Q\{w_{\alpha}\}}, d)^+)$ and hence to $V\otimes
s(W\oplus \Q\{w_{\alpha}\})$.
With this in mind, we define a Sullivan algebra $(\wedge \widetilde{Z},
\widetilde{d})$ by $\widetilde{Z}= V\otimes s(W\oplus \Q \{w_{\alpha}\})
\cong
Z\oplus (V\otimes sw_{\alpha})$, $\widetilde{d}_{|_Z} = d$ and
$\widetilde{d}_{|_{V\otimes sw_{\alpha}}}\equiv 0$. In order to prove
Theorem 1.2, it suffices to show that there exists a quasi-isomorphism
$
\psi  \co  (\wedge \widetilde{Z},  \widetilde{d}) \to (\wedge (V\otimes
C_*(\mathbb{L}_{W}, d)^+), \delta)
$
such that the diagram
$$
\xymatrix@=10pt{
   (\wedge Z, d)\ar@{^{(}->}[dd]_{\simeq}^{\varphi}
                      \ar[rr]^{I}
                               & & (\wedge {\widetilde{Z}}, \widetilde{d}) 
               \ar[dd]^{\simeq}_{\psi} \\
                     \\
   (\wedge (V\otimes C_*(\mathbb{L}_{W}, d)^+), \bar{\delta})
      \ar[rr]_(.45){\overline{\eta}} & &
(\wedge (V\otimes C_*(\mathbb{L}_{W\oplus \Q\{w_{\alpha}\}}, d)^+),
   \bar{\delta})
 }
$$
is commutative,  where $I$ is the inclusion. In fact, we then see that 
the map $I$ is regarded as a Sullivan model for $i^\sharp$. 
Moreover the Sullivan algebra
$(\wedge \widetilde{Z},  \widetilde{d} )$ 
is isomorphic to $(\wedge Z, d)\otimes 
(\wedge(V\otimes sw_\alpha), 0)$ as a DGA.  
%since $I$ admits  a right inverse.
Observe that $(\wedge(V\otimes sw_\alpha), 0)$ is the minimal model
for $\Omega^{k+1}Y$.

We shall construct the required map $\psi$. 
Put $\wedge U = \wedge (V\otimes C_*(\mathbb{L}_{W\oplus
\Q\{w_{\alpha}\}},d)^+)$. Let $\wedge^sU$ be the vector subspace of
$\wedge U$ consisting of elements with wordlength $s$ and 
$\wedge^{\geq s}U$ the ideal of $\wedge U$ generated by $\wedge^sU$.   
Assume that $v \in V_m$, where $m = d_1\text{--depth}(Y)$.
We first choose a cycle
$$
c_\alpha= sw_\alpha-  \sum_i sx_{i_n}\wedge
s[x_{i_{n-1}}[x_{i_{n-2}}, ..., [x_{i_1}, x_{i_0}] ...]]
$$
in $C_*(\mathbb{L}_{W\oplus \Q\{w_{\alpha}\}},d)$
and define an element $\gamma_1$ of $\wedge U$ 
by $\gamma_1=v\otimes c_\alpha$.
Observe that $n > m$ by assumption. We set
$x_{{i_{n-1}}, ..,{i_0}} =
[x_{i_{n-1}}[x_{i_{n-2}}, ...,[x_{i_1}, x_{i_0}] ...]]$.
It follows from \ref{eq:2.1} that, in $\wedge^{\geq 2}U$,
\begin{multline*}
\overline{\delta}(\gamma_1) =  - \ \big(
\sum_{i,j_1}(-1)^{|sx_{i_n}||v_{j_1}'|}(v_{j_1}\otimes sx_{i_n})\cdot
(v_{j_1}'\otimes sx_{{i_{n-1}}, ..,{i_0}})  \\
   + \sum_{i,j_1}(-1)^{|sx_{i_{n-1}, ..,i_0}|
|sx_{i_n}|+|sx_{{i_{n-1}}, ..,{i_0}}||v_{j_1}'|}
(v_{j_1}\otimes sx_{i_{n-1}, ..,i_0}) \cdot
(v_{j_1}'\otimes sx_{i_n} ) \big)%=:\eta_0
\end{multline*}
if $d_1(v)= \sum_{j_1}v_{j_1}v_{j_1}'$.
We see that $\overline{\delta}(\gamma_1)$ belongs to
$\wedge^2U$ and is determined without depending on the term of
$(d- d_1)(v)$ because $sx_{i_n}$ and $sx_{{i_{n-1}}, ..,{i_0}}$
are primitive. Observe that $v_{j_1}$ and $v_{j_1}'$ are in $V_{m-1}$
(see \fullref{lem:key1} for more polished result on the image of
$d_1$).

We next define an element $\gamma_2 \in \wedge^{2}U$ by
\begin{eqnarray*}
\gamma_2 &=&
\sum_{i,j_1}(-1)^{\varepsilon_{i_n, ..,i_0}}
(v_{j_1}\otimes sx_{i_n})\cdot
(v_{j_1}'\otimes sx_{i_{n-1}}\wedge sx_{i_{n-2}, ..,{i_0}}) \\
& & \qquad + \sum_{i,j_1}
(-1)^{\varepsilon_{i_n,..,i_0}'}
(v_{j_1}\otimes sx_{i_{n-1}}\wedge sx_{i_{n-2}, ..,i_0}) \cdot
(v_{j_1}'\otimes sx_{i_n}),
\end{eqnarray*}
where  $\varepsilon_{i_n, ..,i_0}$ and  $\varepsilon_{i_n, ..,i_0}'$
denote the integers
$|sx_{i_n}||v_{j_1}'|+|v_{j_1}\otimes sx_{i_n}|+|v_{j_1}'|+|sx_{i_{n-1}}|$
and $|sx_{i_{n-1}, ..,i_0}||sx_{i_n}|
+|sx_{{i_{n-1}}, ..,{i_0}}||v_{j_1}'|+|v_{j_1}|+|sx_{i_{n-1}}|$,
respectively.
Since $ sx_{i_n}$ is primitive, it follows from \fullref{lem:1} that
 $\overline{\delta}(\gamma_1) = - \overline{\delta}(\gamma_2)$ in
$\wedge^2U$.

In a similar fashion, we can define elements $\gamma_l\in \wedge^{l}U$ 
so that  $\overline{\delta}(\gamma_{l-1})= -
\overline{\delta}(\gamma_{l})$
in  $\wedge^{l}U$ and each term of $\gamma_l$ has the form
$$
y\cdot \big(v_{j_l} \otimes (sx_{i_{n-l+1}}\wedge sx_{i_{n-l}, ..,i_0})\big),
$$
where $v_{j_l}\in V_{m-l+1}$ and $y$ is an element in the ideal of $\wedge
U$
generated by elements of the form $u\otimes sx_{i_s}$ for some $u\in V$.
Since $\overline{\delta}(\gamma_l)\in \wedge^{l}U\oplus
\wedge^{l+1}U$ and $\overline{\delta}(\gamma_{m+1}) =0$ in $\wedge^{m+2}U$,
it follows that $\gamma_v:=\gamma_1 +\cdots +\gamma_{m+1}$ is
a $\overline{\delta}$--cycle in $\wedge U$ (see \ref{eq:3.4} below in
which $\overline{\delta}_1$ denotes the linear part of the differential
$\overline{\delta}$ and
$\overline{\delta}_2= \overline{\delta}-\overline{\delta}_1$).
\begin{equation}\label{eq:3.4}
\xymatrix@R-12pt{
0 &  \\
\gamma_{1} \ar@{->}[u]^{\overline{\delta}_1}
 \ar@{->}[r]^{\overline{\delta}_2}&  0 \\
                                 &    \gamma_{2}
                          \ar@{->}[u]^{\overline{\delta}_1}
                    \ar@{.>}[r]^{\overline{\delta}_2} & 0 \\
                                 &                   &  \gamma_{m+1}
          \ar@{->}[r]^{\overline{\delta}_2}
                    \ar@{.>}[u]^{\overline{\delta}_1} & 0
}
\end{equation}
Observe that the element $\gamma_2 +\cdots +\gamma_{m+1}$ can be regarded as the
element $x_{ij}$ in condition \ref{eq:3.1}.

The same argument above works well to show that $v\otimes sw_{\alpha}$
is a cycle when $v\in V_l$ for $l < m$
since $\text{bl}(\alpha)=n > m =d_1\text{--depth}(Y)$.

We here define a map 
$\psi \co(\wedge \widetilde{Z},  \widetilde{d}) \to (\wedge
(V\otimes C_*(\mathbb{L}_{W\oplus {\mathbb Q}\{w_{\alpha}\}}, d)^+),
\delta)$ 
by $\psi_{|_Z}= \overline{\eta} \varphi$ and
$\psi(v\otimes sw_{\alpha}) = \gamma_v$ for $v\otimes sw_\alpha \in
V\otimes sw_\alpha$. The construction of $\Q[w_{ij}]$
described before \fullref{lem:2} tells us that $\psi$ is a minimal
model.
Moreover we see that all the required conditions for $\psi$ hold.
This completes the proof of \fullref{thm:main}.
\endproof

\begin{exa}
Let us consider the projective space
$\mathbb{C}P^2 = S^2 \cup_{\gamma}e^4$, where
$\gamma$ denotes the Hopf map. 
Let $Y$ be a $4$--connected space with a  minimal model
$(\wedge V, d)$ for which $V$ is a vector space with a basis
$\{x_1, x_2, x_3, y\}$, $d(x_i)=0$ and  $d(y)=x_1x_2x_3$. Since $\gamma$
 is decomposable in $\pi_*(S^2)\otimes \Q$, 
it is evident that  $\text{bl}(\gamma)=\text{bl}([\iota, \iota])= 1 
> 0= d_1\text{--depth}(Y)$, where $\iota$ is the generator of
 $\pi_2(S^2)$. 
Thus \fullref{thm:main} allows us to conclude that
the fibration $\Omega^4Y \to \F_*(\mathbb{C}P^2, Y) \to \Omega^2Y$ 
is rationally trivial. 
\end{exa}

\begin{exa} 
Let ${\mathcal L}P^2$ be the Cayley plane and $\mathbb{C}P^2_i$ a
 copy of the complex projective plane for $i = 1, 2$. Let 
$\iota_i$ denote the generator of $\pi_2(\mathbb{C}P^2_i)$. 
The space 
$\mathbb{C}P^2_1\vee \mathbb{C}P^2_2\cup_{[\iota_1, \iota_2]}e^4$ 
has a CW--decomposition for which the bracket length of each attaching map  
is greater than or equal to $1$.  
Since $H^*({\mathcal L}P^2; \Q)\cong \Q[x_8]/(x_8^3)$, where $\deg x_8 = 8$, 
it follows that 
$\text{WL}({\mathcal L}P^2)= d_1\text{--depth}({\mathcal L}P^2)=0$. 
\fullref{cor:1} yields that, for any based space $Z$,  
\begin{eqnarray*}
[Z\wedge (\mathbb{C}P^2_1\vee \mathbb{C}P^2_2\cup_{[\iota_1, \iota_2]}e^4), 
{\mathcal L}P^2_\Q]_* &\cong& (H^{8-4}(Z; \Q)\oplus H^{23-4}(Z; \Q))^{\oplus 3}\\
& & \ \ \ \ \oplus (H^{8-2}(Z; \Q)\oplus H^{23-2}(Z; \Q))^{\oplus 2}. 
\end{eqnarray*}
\end{exa}

\begin{exa} Let $G$ and $H$ be a compact connected Lie group and a closed
 subgroup of $G$, respectively. 
By considering the K.S--extension of the fibration $G\to G/H\to BH$, we see
that the minimal model  $(\wedge V, d)$ for $G/H$ satisfies the condtions:
$dV^{even}=0$ and $dV^{odd}\subset \wedge V^{even}$.
This implies that $d_1\text{--depth}(G/H) \leq 1$. 
Let $X$ and $\alpha \co  S^k \to X$ be as in \fullref{thm:main}.   
Suppose that $\text{Conn}(G/H)\geq  \text{max}\{k+1,  \dim X\}$. 
Then the fibration 
$$
\Omega^{k+1}Y=\F_*(S^{k+1}, G/H) \stackrel{j^\sharp}{\to}
\F_*(X\cup_{\alpha}e^{k+1}, G/H)  \stackrel{i^\sharp}{\to}
\F_*(X, G/H) 
$$
is rationally trivial if $\text{bl}(\alpha)>1$. 
\end{exa}

\begin{exa}
Recall from \cite{F-H-T} that a simply-connected space $Y$ is elliptic
if $\dim \pi_*(Y)$ $\otimes \Q < \infty$ and $\dim H_*(Y; \Q) < \infty$.
Let $Y$ be an $n$--connected finite dimensional elliptic CW complex
with a minimal model $(\wedge V, d)$.  Let $\{v_i\}$ be a basis of
$V$.  If $v_{i_s}\in V_s-V_{s-1}$, then $\deg v_{i_s}\geq (s+1)n+1$
(see the \fullref{sec:2} for the notation $V_s$). Put
$m=d_1\text{--depth}(Y)$ and let $v$ be an element of $V$ with the
maximal degree.  Then $\deg v$ is odd from Friedlander--Halperin
\cite[Theorem 1 and Lemma 2.5]{F-H}.  Therefore it follows from
\cite[Corollary 1.3(3)]{F-H} that
$$
(m+1)n+1\leq \deg v_{i_m}\leq \deg v\leq \sum_{j:odd}j\cdot \dim V^j\leq
2\dim Y-1
$$
and hence
 $2\dim Y/n>m+1=d_1\text{--depth}(Y)+1$. \fullref{thm:main} enable us to
conclude that the fibration \ref{eq:1.1} is rationally trivial 
if $\text{bl}(\alpha)+1 \geq 2\dim Y/\text{Conn}(Y)$. 
\end{exa}

We give examples which assert that the decomposition
in \fullref{thm:main} does not hold in general when
$\text{bl}(\alpha)\leq \text{WL}(Y)$.
To this end, we here recall the result \cite[Theorem 1.2]{Ko} due to
Kotani. 

Let $(\wedge V, d)$ be a minimal model 
for a simply-connected space $Y$. 
Consider the decomposition  $d= d_1+d_2+ \cdots$ of
the differential $d$ as in \fullref{sec:2}.
The $d$--{\it length} of $Y$, denoted $d$--length(Y), is the least integer $m$
such that $d_i\equiv 0$ for $i< m-1$ and $d_{m-1}\not\equiv 0$. Observe that
the $d$--\text{length} of $Y$ is a topological invariant
(see \cite[Theorem 1.1]{Ko}).
As usual, we define the cup-length
of a space $X$, $c(X)$, by the greatest integer $n$ such that
there are elements $\alpha_1$, ..., $\alpha_n$ in $H^+(X; \Q)$ for which
$\alpha_1\cup \cdots \cup \alpha_n \neq 0$.
Then the main result in \cite{Ko} is stated as follows.

\begin{thm}{\rm\cite[Theorem 1.2]{Ko}}\label{thm:Ko}\qua
Let $X$ be a path connected, finite dimensional CW complex
 and $Y$ a connected space  with
$\text{\em Conn}(Y)\geq \dim X$. Suppose that $X$ is formal.
Then the cohomology algebra $\hq{\F_*(X, Y)}$ is a free algebra
if and only if  $d\text{\em -length}(Y) > c(X)$.
\end{thm}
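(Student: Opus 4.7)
The plan is to exploit the mixed-type model of Section 2 by choosing, thanks to formality of $X$, the commutative DGA model $B := \hq{X}$ with $d_B=0$; its dual coalgebra $B_*$ is then $H_*(X;{\mathbb Q})$ with zero differential and with coproduct linearly dual to the cup product. Since every basis element of $B_*^+$ is a cocycle, the construction following \fullref{thm:basedmaps} produces no generators of type $a_k$ or $b_k$, only generators $c_j$, and the decomposition described after \fullref{lem:2} lets us take the minimal-model generators of $\F_*(X,Y)$ to be simply $w_{ij}=v_i\otimes c_j$. The pivotal observation is then the tautology that $\hq{\F_*(X,Y)}$ is a free graded-commutative algebra if and only if $\overline{\delta}\equiv 0$ on this minimal model $({\mathbb Q}[w_{ij}],\overline{\delta})$, because minimal models of free algebras have trivial differential and are unique up to isomorphism.

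For the implication $d\text{-length}(Y) > c(X)\Rightarrow$ freeness, note that $d\text{-length}(Y)=m$ forces every term of $dv$ to have wordlength at least $m$. Applying formula \ref{eq:2.1} to compute $\overline{\delta}(v_i\otimes c_j)$ yields summands containing an iterated reduced coproduct $\overline{D}^{(\overline{m}-1)}(c_j)$ with $\overline{m}\geq m > c(X)$. Since $X$ is formal, this reduced iterated coproduct on $B_*^+$ is linearly dual to the $\overline{m}$-fold cup product on $H^+(X;{\mathbb Q})^{\otimes \overline{m}}$, which vanishes identically once $\overline{m}>c(X)$. Hence $\overline{\delta}(v_i\otimes c_j)=0$ for every generator, so the minimal model carries zero differential and the cohomology is free on the classes $[w_{ij}]$.

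For the reverse direction, I would argue by contrapositive: if $d\text{-length}(Y)=m\leq c(X)$, then by definition there exists $v\in V$ with $d_{m-1}v\neq 0$, and by $m\leq c(X)$ one can pick $c_j\in B_*^+$ dual to a non-vanishing $m$-fold cup product pairing nontrivially against $d_{m-1}v$. Formula \ref{eq:2.1} then supplies a summand of $\overline{\delta}(v\otimes c_j)$ of wordlength exactly $m\geq 2$. The remaining task is to track this leading term through the inductive minimalization algorithm of \fullref{lem:2} and verify that it cannot be absorbed, so that the minimal model retains a nontrivial differential and hence $\hq{\F_*(X,Y)}$ inherits an honest polynomial relation among its generators.

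The main obstacle is precisely this last step. Showing that a non-vanishing iterated reduced coproduct genuinely survives as an obstruction in cohomology requires a careful interplay between the $d_1$-depth filtration of $V$ (introduced before \fullref{thm:d_1}) and the coproduct filtration of $B_*^+$: one must select $v$ and $c_j$ so that the leading summand of $\overline{\delta}(v\otimes c_j)$ lies in a subspace not touched by the decomposable corrections $x_{ij}$ appearing in \ref{eq:3.1}. Formality of $X$ is essential at this point, since it trivializes $d_{B_*}$ and thus removes the $u_{ik}, v_{ik}$ generators that would otherwise provide room for cancellation; exploiting this is what allows the obstruction to persist and contradict freeness.
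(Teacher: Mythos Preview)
This theorem is not proved in the present paper at all: it is merely quoted from Kotani's article \cite{Ko} and then applied in the examples that follow. There is therefore no ``paper's own proof'' to compare your attempt against.

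On the substance of your sketch: the setup is correct and slightly cleaner than you realise. With $B=H^*(X;\Q)$ and $d_B=0$, every element of $B_*^+$ is a cycle, and since $d$ is minimal and $d_{B_*}=0$, the image $\overline{\delta}(v_i\otimes c_j)$ is already decomposable. Hence $(\Q[V\otimes B_*^+],\overline{\delta})$ is itself the minimal model of $\F_*(X,Y)$; no correction terms $x_{ij}$ are needed and there is no ``inductive minimalization'' to track through. Your implication $d\text{-length}(Y)>c(X)\Rightarrow \overline{\delta}\equiv 0$ then follows exactly as you say, via \fullref{lem:1}.

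The converse, however, is where the real work lies, and you have not closed it. Knowing that $d_{m-1}\not\equiv 0$ on $V$ and that the $(m-1)$-fold reduced coproduct $\overline{D}^{(m-1)}\not\equiv 0$ on $B_*^+$ does not immediately give a pair $(v,c)$ with $\overline{\delta}(v\otimes c)\neq 0$: formula~\ref{eq:2.1} combines the two through a bilinear pairing into $\wedge^m(V\otimes B_*^+)$, and one must rule out cancellation among the cross-terms. Your last paragraph proposes to control this via the corrections $x_{ij}$, but as just noted those corrections vanish in the formal case, so that line of attack is aimed at the wrong target. What is actually needed is an injectivity statement for the canonical map $\wedge^m V\otimes (B_*^+)^{\otimes m}_{\mathrm{sym}}\to \wedge^m(V\otimes B_*^+)$ on the relevant bigraded piece, or an explicit choice of $v$ and $c$ adapted to a single monomial of $d_{m-1}v$ and a single nonzero $m$-fold cup product; that is the content you would have to supply, and it is essentially what Kotani does.
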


\begin{exa}
 Consider the projective space
$\mathbb{C}P^3=\mathbb{C}P^2 \cup_\alpha e^6$. We observe
 that $\alpha$ is indecomposable in $\pi_*(\mathbb{C}P^2)\otimes \Q$.
Since $d\text{-length}(Y)= 3 =c(\mathbb{C}P^3)$,
it follows from \fullref{thm:Ko} that $\hq{\F_*(\mathbb{C}P^3, Y)}$
is not free. Thus  $\F_*(\mathbb{C}P^3, Y)$ is not rationally
 homotopy equivalent to the product
$\F_*(\mathbb{C}P^2, Y)\times \Omega^6 Y$ because
$\hq{\F_*(\mathbb{C}P^2, Y) \times\Omega^6 Y}$ is free.
Observe that $\text{bl}(\alpha)= 0 = d_1\text{--depth}(Y)$ in this case.
\end{exa}

\begin{exa}
Let  $(\wedge V, d)=(\wedge (x,y),d)$ be the minimal model for  $S^6$, where
 $\deg x=6$, $\deg y=11$, $dx=0$ and $dy=x^2$.
Consider  the fibration
$
\Omega^4 S^6 \stackrel{j^\sharp}{\to} \F_*(\mathbb{C}P^2,S^6)
\stackrel{i^\sharp}{\to} \Omega^2 S^6
$
which is induced from the cofibre sequence
$S^2 \stackrel{i}{\to} \mathbb{C}P^2= S^2 \cup_{\gamma}e^4
  \stackrel{j}{\to} S^{4}$. Let $\iota$ be the generator in
 $\pi_2(S^2)\otimes \Q$. 
Observe that $\gamma= q[\iota, \iota]$ for some nonzero rational number $q$. 
We can choose  
$\Q[V\otimes C_*(\mathbb{L}_{\Q\{\tilde{\iota}, w_{\gamma}\}}, d)^+]$
as a Sullivan model for the function space 
$\F_*(\mathbb{C}P^2, S^6)$, where $\tilde{\iota}$ denotes the element
in $\pi_1(\Omega S^2)\otimes \Q$ corresponding to $\iota$  via the connecting
isomorphism
$\pi_2(S^2)\otimes \Q \to \pi_{1}(\Omega S^2)\otimes \Q$.
 Put $v_4=x\otimes s\tilde{\iota}$,
  $v_{9}=y\otimes s\tilde{\iota}$,
 $v_2=x\otimes (sw_{\gamma}-q(s\tilde{\iota}\wedge s\tilde{\iota}))$
 %$w_3=y\otimes (sw_{[\iota,\iota]}-s\tilde{\iota}\wedge s\tilde{\iota})$,
 and $v_7=y\otimes (sw_{\gamma}- q(s\tilde{\iota}\wedge
s\tilde{\iota}))$.
  Then a model for the above fibration is given by
 $$(\wedge (v_4,v_{9}),0)\to
 (\wedge (v_4,v_{9},v_2,v_7),\overline \delta)
 \to (\wedge (v_2,v_7),0)$$
 where $\overline \delta (v_7)=-2q{v_4}^2$
 and $\overline \delta (v_i)=0$
 for $i\neq 7$ (see the proof of \fullref{thm:main} for the
construction).
 Therefore the fibration is not rationally trivial.
It is readily seen that
$\text{bl}([\iota, \iota])= 1= d_1\text{--depth}(S^6)$ in this case.
 \end{exa}

\begin{exa}
Let $Y$ be a $6$--connected space whose minimal model has the trivial
differential. Then the differentials of the minimal models for the spaces
 $\F_*(\mathbb{C}P^2, Y)$ and   $\F_*(\mathbb{C}P^3, Y)$ are also trivial.
Moreover we see that
$
\Omega^6 Y \stackrel{j^\sharp}{\to} \F_*(\mathbb{C}P^3, Y)
\stackrel{i^\sharp}{\to}
\F_*(\mathbb{C}P^2, Y)
$
is rationally trivial though  $\text{bl}(\alpha)= 0= d_1\text{--depth}(Y)$.
This fact implies that the converse assertion of \fullref{thm:main}
 does not hold in general.
\end{exa}

\medskip
{\bf Acknowledgements}\qua
The authors are grateful to Jean-Claude Thomas for useful comments
on the Whitehead length and the $d_1$--depth.
They would like to thank
Yasusuke Kotani for explaining his result \cite[Theorem 1.2]{Ko}
and wish to express their thanks to the referee of the previous version
of this article. His comments lead us to the examples described in
\fullref{sec:3}.

\section{Appendix}\label{sec:4}

We prepare to prove \fullref{thm:d_1}.
Let $(\wedge V, d)$ be the minimal model for a simply-connected space $Y$.
Recall the graded Lie algebra $L$
associated with  a minimal model  $(\wedge V, d)$ for $Y$
(see \cite[Section 21, (e)]{F-H-T}).
The graded vector space $L$ is defined by
$sL=\text{Hom}(V, \Q)$.
We define a pairing $\langle \ ;\ \rangle \co  V\times sL \to \Q$ by
$\langle v;sx \rangle =(-1)^{\deg v}sx(v)$.
Moreover, using the pairing, define a trilinear map
$$
\langle \ ;\ , \ \rangle \co  \wedge^{2}V \times sL \times sL \to \Q
$$
by $\langle v\wedge w ;sx , sy \rangle =
\langle v;sx\rangle \langle w;sy \rangle
+(-1)^{|v||w|} \langle w;sx\rangle \langle v;sy\rangle$.
Then the Lie bracket $[ \ , \ ]$ in $L$ is given by requiring that (4.1):
$$
\langle v ; s[x,y]\rangle  = (-1)^{\deg y+1}\langle d_1 v ; sx, sy
\rangle
$$
for $x, y \in L$ and $v\in V$.
The result \cite[Theorem 21.6]{F-H-T} asserts that $L$ is isomorphic to
the homotopy Lie algebra $L_Y$. Therefore,
in order to prove \fullref{thm:d_1},
it suffices to show that  the $d_1$--depth of $Y$ is
equal to the integer $\text{WL}(L)$, which is the greatest integer $n$
such that $[L, L]^{(n)}\neq 0$. 
As in the proof of \fullref{thm:main}, we may write
$x_{i_n,..,i_0}$ for the element
$[x_{i_n} [x_{i_{n-1}}, ...,[x_{i_1}, x_{i_0}]]]$ in $L$.

\begin{lem}
 \label{lem:5.2}
 For any $\alpha \in V_{n-1}$ and any $x_{i_n,..,i_0}\in [L,L]^{(n)}$,
$\langle \alpha, sx_{i_n, ...,i_0}\rangle = 0$.
\end{lem}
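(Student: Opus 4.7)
The plan is to prove the statement by induction on $n$, exploiting formula (4.1) to convert pairings with brackets into pairings with $d_1\alpha$, and then using the filtration condition $d_1(V_{n-1})\subset \wedge^2 V_{n-2}$ to reduce to the inductive hypothesis one level lower.

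For the base case $n=1$: if $\alpha \in V_0$, then $d_1\alpha=0$ by definition, so formula (4.1) immediately gives
$$
\langle \alpha;\, s[x_{i_1}, x_{i_0}]\rangle = (-1)^{\deg x_{i_0}+1}\langle d_1\alpha;\, sx_{i_1}, sx_{i_0}\rangle = 0.
$$

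For the inductive step, assume the statement for $n-1$. Take $\alpha\in V_{n-1}$ and write $x_{i_n,\dots,i_0} = [x_{i_n},\, x_{i_{n-1},\dots,i_0}]$ with $x_{i_{n-1},\dots,i_0}\in [L,L]^{(n-1)}$. Applying (4.1) gives
$$
\langle \alpha;\, sx_{i_n,\dots,i_0}\rangle = (-1)^{\deg x_{i_{n-1},\dots,i_0}+1}\langle d_1\alpha;\, sx_{i_n}, sx_{i_{n-1},\dots,i_0}\rangle.
$$
Since $\alpha\in V_{n-1}$, we have $d_1\alpha\in \wedge V_{n-2}$; because $d_1\colon V\to \wedge^{2}V$ is the quadratic part of $d$, this forces $d_1\alpha\in \wedge^{2}V_{n-2}$. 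Writing $d_1\alpha = \sum_j v_j\wedge w_j$ with $v_j,w_j\in V_{n-2}$ and expanding the trilinear pairing,
$$
\langle v_j\wedge w_j;\, sx_{i_n}, sx_{i_{n-1},\dots,i_0}\rangle = \langle v_j; sx_{i_n}\rangle\langle w_j; sx_{i_{n-1},\dots,i_0}\rangle \pm \langle w_j; sx_{i_n}\rangle\langle v_j; sx_{i_{n-1},\dots,i_0}\rangle.
$$
Both $v_j$ and $w_j$ lie in $V_{n-2}$ and $x_{i_{n-1},\dots,i_0}\in [L,L]^{(n-1)}$, so the inductive hypothesis forces the factors $\langle w_j; sx_{i_{n-1},\dots,i_0}\rangle$ and $\langle v_j; sx_{i_{n-1},\dots,i_0}\rangle$ to vanish. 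Hence every summand is zero, completing the induction.

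The only subtle point is justifying that $d_1(V_{n-1}) \subset \wedge^{2}V_{n-2}$ rather than in some larger subspace of $\wedge V_{n-2}$; this uses precisely that $d_1$ is the quadratic part of $d$ (so its image lies in $\wedge^{2}V$), combined with the inductive definition of the filtration $\{V_i\}$. Once this is in hand, the rest is a direct bookkeeping of the pairing and its signs, and there is no substantial obstacle.
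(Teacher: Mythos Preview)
Your proof is correct and follows essentially the same inductive argument as the paper: both use formula (4.1) to reduce $\langle \alpha, sx_{i_n,\dots,i_0}\rangle$ to a trilinear pairing involving $d_1\alpha$, write $d_1\alpha$ as a sum of products of elements in $V_{n-2}$, and then invoke the inductive hypothesis on the factors paired against $sx_{i_{n-1},\dots,i_0}$. Your version is slightly more explicit in spelling out why $d_1(V_{n-1})\subset \wedge^2 V_{n-2}$ and in expanding the trilinear pairing term by term, but the structure is identical.
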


\begin{proof}
We argue by induction on $n$.
From the formula (4.1), we see that
$\langle \alpha, sx_{i_1,i_0}\rangle = 0$ for any
$\alpha \in V_{0}$.
Suppose that $\langle \beta, sx_{i_{n-1}, ...,i_0}\rangle = 0$ for any
$\beta \in V_{n-2}$.  Let $\alpha$ be an element of $V_{n-1}$.
Then we can write
$d_1(\alpha) =\sum_j\beta_j\beta_j'$ with some elements
$\beta_j$ and $\beta_j'$ of $V_{n-2}$.
Thus it follows from the definition of the trilinear map
$\langle \ ;\ , \ \rangle$  that
$$\eqalignbot{
 \langle \alpha, sx_{i_n, ...,i_0}\rangle
  &=  \pm  \langle d_1\alpha;
sx_{i_n}, s[x_{i_{n-1}},... [x_{i_1},x_{i_0}]] \rangle   \cr
  &= \pm \langle \sum_j\beta_j\beta_j';
        sx_{i_n}, s[x_{i_{n-1}},... [x_{i_1},x_{i_0}]]\rangle  \ = \ 0.} 
\proved$$
\end{proof}

\begin{prop}
\label{prop:white1}
$d_1\text{\em --depth}(Y) \geq \text{\em WL}(L)$.
\end{prop}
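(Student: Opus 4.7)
The plan is to combine the preceding lemma with the nondegeneracy of the pairing $\langle \ ; \ \rangle \colon V \times sL \to \mathbb{Q}$. Setting $n := \text{WL}(L)$, the definition of the Whitehead length yields a nonzero element $\zeta \in [L,L]^{(n)}$. Expanding $\zeta$ as a linear combination of iterated brackets $x_{i_n,\dots,i_0}$ and invoking bilinearity of $\langle\ ;\ \rangle$ in the Lie variable, the preceding lemma extends to give $\langle \alpha ; s\zeta\rangle = 0$ for every $\alpha \in V_{n-1}$.

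Next I would exploit that, by the very definition of the graded Lie algebra $L$ associated with $(\wedge V,d)$, one has $sL = \operatorname{Hom}(V,\mathbb{Q})$; consequently the pairing is nondegenerate in the second variable, so any nonzero $\xi \in sL$ satisfies $\langle v ; \xi\rangle \neq 0$ for some $v \in V$. Applied to $\xi = s\zeta \neq 0$ this produces an element $\alpha \in V$ with $\langle \alpha ; s\zeta\rangle \neq 0$, and the previous paragraph then forces $\alpha \notin V_{n-1}$. It remains to show that $\alpha$ lies in some $V_m$, for then the chain $V_{n-1} \subsetneq V_n \subseteq V_{n+1} \subseteq \cdots \subseteq V_m$ cannot consist entirely of equalities, so $V_{j-1} \subsetneq V_j$ for some $j$ with $n \leq j \leq m$, whence $d_1\text{--depth}(Y) \geq j \geq n = \text{WL}(L)$.

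To establish the exhaustion $V = \bigcup_k V_k$, I would induct on $\deg v$: since $Y$ is simply connected, $V^{\leq 1} = 0$, so every factor in any decomposition $d_1(v) = \sum_i v'_i v''_i \in \wedge^{\geq 2} V$ has degree strictly smaller than $\deg v$. By the induction hypothesis each $v'_i, v''_i$ lies in some $V_{k_i}$, and then $v \in V_{(\max_i k_i) + 1}$. This step is the only real subtlety in the argument, since $V_k$ is defined iteratively and could in principle stabilize before reaching all of $V$; simple-connectivity is what rules this out. Once the exhaustion is in hand, the proof collapses to the slogan that a nonzero element of $[L,L]^{(n)}$ detects, through the duality $\langle\ ;\ \rangle$, a generator of $V$ that cannot already lie in $V_{n-1}$.
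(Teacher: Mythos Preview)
Your proof is correct and follows essentially the same route as the paper: pick a nonzero element of $[L,L]^{(n)}$, use the nondegeneracy of the pairing $\langle\ ;\ \rangle$ to find a dual $v\in V$, and invoke the preceding lemma to see $v\notin V_{n-1}$. The paper's version is terser---it simply picks a single nonzero iterated bracket $x_{i_m,\dots,i_0}$ (which exists since such brackets span $[L,L]^{(m)}$) and then jumps directly from $v_m\notin V_{m-1}$ to $d_1\text{--depth}(Y)\geq m$ without further comment. Your proof is more careful in making explicit the exhaustion $V=\bigcup_k V_k$ via the degree induction, which is exactly what is needed to justify that last implication; the paper leaves this step implicit.
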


\begin{proof}
Suppose that $[L,L]^{(m)}\neq 0$. We choose a nonzero element
$x_{i_{m}, ...,i_0}$ of $[L,L]^{(m)}$.
Let $v_m$ be an element of
$V$ such that $\langle v_m, sx_{i_m, ...,i_0}\rangle \neq 0$.
\fullref{lem:5.2} yields that $v_m \not\in V_{m-1}$ and hence
the $d_1\text{--depth}(Y) \geq m$. 
\end{proof}

In order to complete the proof of \fullref{thm:d_1}, it remains to
prove that $d_1\text{--depth}(Y)$ is less than or equal to
$\text{WL}(L)$.
To this end, we first characterize the vector space $V_0$ using the
space $S$ of indecomposable elements of $L$. One can express the vector
space
as $L=S\oplus [L, L]$.

\begin{lem}
 \label{lem:sS}
 $sS = \text{\em Hom} (V_0, \Q)$.
\end{lem}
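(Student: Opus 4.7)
The plan is to show that the natural restriction map
$\mathrm{res}\colon sL = \mathrm{Hom}(V, \Q) \to \mathrm{Hom}(V_0, \Q)$
has kernel exactly $s[L, L]$. Once this is proved, the direct-sum decomposition $sL = sS \oplus s[L, L]$ (dual to $L = S \oplus [L, L]$) forces $\mathrm{res}$ to restrict to an isomorphism $sS \stackrel{\cong}{\to} \mathrm{Hom}(V_0, \Q)$, which is the identification asserted by the lemma. The inclusion $s[L, L] \subseteq \ker(\mathrm{res})$ is immediate from formula~(4.1): for any $v \in V_0$ and $x, y \in L$ we have $d_1 v = 0$, hence $\langle v; s[x, y]\rangle = (-1)^{\deg y + 1}\langle d_1 v; sx, sy\rangle = 0$, so every element of $s[L, L]$ annihilates $V_0$.

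For the reverse inclusion $\ker(\mathrm{res}) \subseteq s[L, L]$ I would argue by duality. Formula~(4.1), together with the graded symmetries of $\langle\,;\,,\,\rangle$ on $\wedge^{2} V$, says that the bilinear map $sL \otimes sL \to sL$ given by $sx \otimes sy \mapsto s[x, y]$ factors through $\wedge^{2} sL$ and, under the perfect pairing $V \times sL \to \Q$ (together with its induced pairing of $\wedge^{2} V$ with $\wedge^{2} sL$ in each degree), is precisely the graded dual of $d_1 \colon V \to \wedge^{2} V$. Since $\ker d_1 = V_0$, the image of this dual map in $sL$ equals the annihilator of $V_0$, that is, $\ker(\mathrm{res})$. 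But this image is by construction $s[L, L]$, which yields the required equality.

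The main obstacle will be the second step: cleanly establishing the duality between $d_1$ and the bracket. The signs built into $\langle\,;\,,\,\rangle$ must be tracked carefully, and one must justify the degree-wise identification $(\wedge^{2} V)^{*} \cong \wedge^{2} sL$ using the standing local finiteness assumption on $V$. Once this duality is correctly set up, the standard linear-algebra fact that the image of the transpose of a linear map equals the annihilator of its kernel delivers $s[L, L] = \ker(\mathrm{res})$ with no further computation, and the lemma follows.
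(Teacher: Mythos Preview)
Your proof is correct. Both you and the paper exploit formula~(4.1), which links $d_1$ to the Lie bracket, but the paper proceeds by choosing explicit bases: it fixes bases $\{x_i\}$ of $S$ and $\{y_j\}$ of $[L,L]$, passes to the dual basis $\{(sx_i)^*\}\cup\{(sy_j)^*\}$ of $V$, and checks by direct calculation with~(4.1) that $V_0$ is precisely the span of the $(sx_i)^*$. Your argument packages the same content more abstractly: you recognise that~(4.1) exhibits the bracket $\wedge^2 sL \to sL$ as the linear dual of $d_1\colon V \to \wedge^2 V$, and then invoke the general fact that the image of a transpose equals the annihilator of the kernel. Your route is basis-free and makes the underlying duality transparent; the paper's explicit computation is more elementary and sidesteps the sign bookkeeping you rightly flag as the main point to verify.
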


\begin{proof} Let $\{x_i\}$ and $\{y_j\}$ be bases for $S$ and
$[L, L]$, respectively.
Let $\{(sy_j)^*\}\cup  \{(sx_i)^*\}$ be the basis of $V$ which
is the dual to the basis $\{sy_j\}\cup  \{sx_i\}$ of $sL$.
It suffices to prove that $V_0$ is the vector space spanned by
$\{(sx_i)^*\}$.
Since  $\langle d(sx_i)^*; sx, sy\rangle
= \langle (sx_i)^*; s[x, y]\rangle =0 $ for any $x,y \in V$, it follows that
$(sx_i)^*\ \in V_0$.
For any $v \in V_0$, we write $v= \sum_i\lambda_i(sx_i)^*
+\sum_j\mu_j(sy_j)^*$ and $sy_j= \sum_{k_j}s[a_{k_j}, b_{k_j}]$ for some
$a_{k_j}$ and $b_{k_j}$ in $L$.
It follows  that
\begin{eqnarray*}
0 \ = \ \sum_{k_j}\langle dv; sa_{k_j}, sb_{k_j}\rangle &=&
 \langle  \sum_i\lambda_i(sx_i)^*
+\sum_j\mu_j(sy_j)^* , \sum_{k_j}s[a_{k_j}, b_{k_j}]\rangle \\
&=&\langle  \sum_i\lambda_i(sx_i)^*
+\sum_j\mu_j(sy_j)^* , sy_j \rangle \ = \ \mu_j . 
\end{eqnarray*}
Thus we have  $v= \sum_i\lambda_i(sx_i)^*$.
\end{proof}

%To proceed further,  
We here study a fundamental property of
the quadratic part of the differential $d$.
Write $V_n=\overline{V_n}\oplus V_{n-1}$ and
fix a basis $\{w_j\}$ for $\overline{V_n}$.
%It is readily seen
%that if $n \geq 1$, then $d_1w_j$'s are linearly independent.

\begin{lem}
 \label{lem:key1}
For any $u \in V_{n+1}$,
there exist elements $e_j \in V_0$ and $f_s, g_s\in V_{n-1}$ such
 that
$$
d_1u= \sum_je_jw_j +\sum_sf_sg_s.
$$
\end{lem}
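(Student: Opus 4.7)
The plan is to write $d_1 u$ in terms of the splitting $V_n = \overline{V_n} \oplus V_{n-1}$ and exploit $d_1^2 = 0$ to pin down the shape of each piece. Since $u \in V_{n+1}$ we have $d_1 u \in \wedge^2 V_n$, and
$$\wedge^2 V_n \;=\; \wedge^2 \overline{V_n} \,\oplus\, (\overline{V_n} \otimes V_{n-1}) \,\oplus\, \wedge^2 V_{n-1},$$
so we write $d_1 u = P + Q + R$ with $P \in \wedge^2 \overline{V_n}$, $Q = \sum_j w_j \cdot e_j$ for some $e_j \in V_{n-1}$, and $R \in \wedge^2 V_{n-1}$. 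The case $n = 0$ is immediate because then $V_{n-1} = 0$ and any element of $\wedge^2 V_0$ already has the required form. For $n \geq 1$, since $V_0 \subseteq V_{n-1}$, the lemma reduces to showing that $P = 0$ and $e_j \in V_0$ for every $j$.

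I would extract these two facts by applying $d_1$ and projecting $d_1^2 u = 0$ onto the summand $\overline{V_n} \otimes \wedge^2 V_{n-1}$ of $\wedge^3 V_n$ containing exactly one factor from $\overline{V_n}$. Because $d_1$ sends $V_{n-1}$ into $\wedge^2 V_{n-2}$, the summand $d_1 R$ and the subterm $\sum_j (d_1 w_j) \cdot e_j$ of $d_1 Q$ both lie in $\wedge^3 V_{n-1}$, which has no $\overline{V_n}$ factor. Hence on the one-factor summand one is left with
$$d_1 P \;+\; \sum_j (-1)^{|w_j|}\, w_j \cdot (d_1 e_j) \;=\; 0.$$

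Next, I would project once more onto $\overline{V_n} \otimes (\wedge^2 V_{n-1} / \wedge^2 V_{n-2})$. The second sum vanishes in this quotient because $e_j \in V_{n-1}$ forces $d_1 e_j \in \wedge^2 V_{n-2}$. The defining properties $V_n = \{v : d_1 v \in \wedge^2 V_{n-1}\}$ and $V_{n-1} = \{v : d_1 v \in \wedge^2 V_{n-2}\}$ together show that $d_1$ induces an injection $\overline{V_n} \hookrightarrow \wedge^2 V_{n-1}/\wedge^2 V_{n-2}$, so the classes $\{[d_1 w_i]\}$ form a linearly independent family. Expanding $d_1 P$ by the Leibniz rule and reading off the coefficient of each basis vector $w_k$ of $\overline{V_n}$, this linear independence forces every scalar appearing in $P$ to vanish, so $P = 0$.

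Feeding $P = 0$ back into the displayed equation collapses it to $\sum_j (-1)^{|w_j|} w_j \cdot (d_1 e_j) = 0$ in $\overline{V_n} \otimes \wedge^2 V_{n-1}$, and since $\{w_j\}$ is a basis of $\overline{V_n}$ we conclude $d_1 e_j = 0$, i.e.\ $e_j \in V_0$, as required. The only genuinely fiddly point is tracking Koszul signs when reorganizing $(d_1 w_i)\wedge w_j$ and $w_i \wedge (d_1 w_j)$ as pure tensors in $\overline{V_n} \otimes \wedge^2 V_{n-1}$, but these signs are inert for the structural argument above.
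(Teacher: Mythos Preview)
Your proof is correct and follows essentially the same route as the paper: both decompose $d_1 u$ according to $V_n=\overline{V_n}\oplus V_{n-1}$, apply $d_1^2=0$, and read off the coefficient of each $w_j$ in the part with exactly one $\overline{V_n}$ factor. Your injectivity of $\overline{V_n}\hookrightarrow \wedge^2 V_{n-1}/\wedge^2 V_{n-2}$ is precisely the paper's step ``$\sum_i\mu_{ij}d_1w_i\in\wedge^2 V_{n-2}\Rightarrow\sum_i\mu_{ij}w_i\in V_{n-1}\Rightarrow\mu_{ij}=0$'', and both then conclude $d_1e_j=0$ in the same way.
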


\begin{proof}
The result for $n =0$ is immediate. Let us assume that $n\geq 1$.
We can write
$$
d_1u =\sum_{i\leq j}\lambda_{ij}w_iw_j + \sum_je_jw_j + \sum_sf_sg_s
$$
with some elements $e_j,  f_s$,  $g_s\in V_{n-1}$ and $\lambda_{ij}\in \Q$.
By applying the differential $d_1$ to the equality, we have
\begin{eqnarray*}
0 = d_1d_1u &=&\sum_{i\leq j}\lambda_{ij}d_1(w_i)w_j +
                   \sum_{i\leq j}(-1)^{|w_i|}\lambda_{ij}w_id_1(w_j)
   + \sum_jd_1(e_j)w_j + Z \\
           &=& \sum_j\big(\sum_i \mu_{ij}d_1w_i + d_1e_j \big)w_j + Z
\end{eqnarray*}
in which $\mu_{ii} = 2 \lambda_{ii}$,  $\mu_{ij} = \lambda_{ij}$ for $i<j$,
$\mu_{ij} = (-1)^{|w_j|+|w_j||d_1w_i|}\lambda_{ij}$ for $i>j$ and
$Z$ is an appropriate element of $\wedge^{\geq 2}V_{n-1}$. Thus
we see that $\sum_i \mu_{ij}d_1w_i + d_1e_j=0$ for any $j$.
Since $d_1e_j \in \wedge V_{n-2}$,
it follows that $\sum_i \mu_{ij}w_i$ is in $V_{n-1}$ and hence
$\sum_i \mu_{ij}w_i=0$. The fact enables us to conclude that
$\mu_{ij}= 0$ for any $i$, $j$ and that $e_j $ is in $V_0$.
We have the result.
\end{proof}

\fullref{lem:sS} allows us to
choose a basis $\{sx_k\}_{k\in J}$ for $sS$ and its dual basis
$\{e_k\}_{k\in J}$ for $V_0$. Let $\{w_m\}_{m\in M}$ be a basis for
$\overline{V_1}$.
We can write $d_1w_m
 = \sum_{k_1,k_0}\lambda_{k_1,k_0}^{(m)}e_{k_1}e_{k_0}$, where
$\lambda_{k_0,k_0}^{(m)} =0$ if $|e_{k_0}|$ is odd.

\begin{lem}
 \label{lem:key2}
 Let $\{v_p^{(n)}\}_{1\leq p\leq l_n}$ be a basis for $\overline{V_n}$,
 where $n\geq 1$.
Then there exist rational numbers $\theta_{k_n,..,k_2,m}^{v_p^{(n)}}$
for all $k_n,..,k_2$ and $m$ such that
\begin{small}  
$$
(-1)^{|s[x_{k_{n-1}}[ ...,[x_{k_1}, x_{k_0}]...]]|}
    \langle
      v_p^{(n)}, s[x_{k_n},[x_{k_{n-1}}[ ...,[x_{k_1}, x_{k_0}]...]]]
         \rangle
= \sum_m \theta_{k_n,..,k_2,m}^{v_p^{(n)}}\lambda_{k_1,k_0}^{(m)}
$$
\end{small}%
and the matrix $\big(  \theta_{k_n,..,k_2,m}^{v_p^{(n)}}   \big)$ with
$l_n$ columns is of full rank; that is, the column vectors obtained from
 the matrix are linearly independent.
Here, we regard the set $\{(k_n,..,k_2,m)\}$ as the ordered set $\{I_i\}$
 by using the lexicographic order on elements $(k_n,..,k_2,m)$. Then the
$(i,p)$ component of
the matrix $\big(  \theta_{k_n,..,k_2,m}^{v_p^{(n)}}   \big)$
is given by $\theta_{I_i}^{v_p^{(n)}}$.
\end{lem}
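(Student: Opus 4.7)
The natural approach is induction on $n$. For the base case $n=1$, take $\{v_p^{(1)}\} = \{w_m\}$; applying the bracket formula $\langle v; s[x,y]\rangle = (-1)^{\deg y+1}\langle d_1 v; sx, sy\rangle$ to $d_1 w_p = \sum_{k_1,k_0}\lambda_{k_1,k_0}^{(p)}e_{k_1}e_{k_0}$, together with the dual pairing $\langle e_k; sx_j\rangle = (-1)^{|e_k|}\delta_{k,j}$ from \fullref{lem:sS}, computes the left-hand side directly and exhibits $(\theta_m^{v_p^{(1)}})$ as a signed identity matrix, which has full rank.

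For the inductive step, apply \fullref{lem:key1} to $v_p^{(n)} \in V_n = V_{(n-1)+1}$ to write
\[d_1 v_p^{(n)} = \sum_{j,k} a_{j,k}^{(p)}\, e_k\, v_j^{(n-1)} + \sum_s f_s^{(p)} g_s^{(p)}, \qquad a_{j,k}^{(p)}\in \Q,\ f_s^{(p)}, g_s^{(p)} \in V_{n-2}.\]
The bracket formula reduces the left-hand side of the lemma to $\langle d_1 v_p^{(n)}; sx_{k_n}, sx_{k_{n-1},\ldots,k_0}\rangle$. Expanding via the trilinear pairing and invoking \fullref{lem:5.2} with $x_{k_{n-1},\ldots,k_0}\in [L,L]^{(n-1)}$ paired against the elements $f_s^{(p)}, g_s^{(p)}, e_k$ of $V_{n-2}$ (here $V_0 \subset V_{n-2}$ since $n\geq 2$) eliminates the $f_sg_s$ contributions and the second summand of each $e_k v_j^{(n-1)}$ term, leaving
\[\sum_j (-1)^{|e_{k_n}|}\, a_{j,k_n}^{(p)}\, \langle v_j^{(n-1)}; sx_{k_{n-1},\ldots,k_0}\rangle.\]
Substituting the inductive hypothesis produces an auxiliary sign $(-1)^{|sx_{k_{n-2},\ldots,k_0}|}$; however, the degree constraint $|e_{k_0}|+|e_{k_1}|=|w_m|+1$ that is forced whenever $\lambda_{k_1,k_0}^{(m)}\neq 0$ allows this sign to be rewritten as one depending only on $(k_2,\ldots,k_{n-2},m)$. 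Defining $\theta_{k_n,\ldots,k_2,m}^{v_p^{(n)}}$ to absorb the remaining signs then yields the desired identity.

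For the full-rank assertion, the key observation is that the matrix $A=(a_{j,k}^{(p)})_{(j,k),p}$ itself has linearly independent columns: any relation $\sum_p c_p\, a_{j,k}^{(p)} = 0$ would force $d_1(\sum_p c_p v_p^{(n)}) \in \wedge V_{n-2}$, hence $\sum_p c_p v_p^{(n)} \in V_{n-1}\cap \overline{V_n} = 0$, so each $c_p$ vanishes. Since the recursion above exhibits $(\theta_{k_n,\ldots,k_2,m}^{v_p^{(n)}})$ as the product of $A$ (acting on each $k_n$-slice) with the matrix $(\theta_{k_{n-1},\ldots,k_2,m}^{v_j^{(n-1)}})$ that is full rank by induction, a two-step argument (first use the inductive full-rank at each fixed $k_n$, then use column independence of $A$) transfers linear independence to the matrix at level $n$.

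The principal difficulty is the sign bookkeeping, specifically verifying that after substituting the inductive hypothesis, the parity of $|sx_{k_{n-2},\ldots,k_0}|$ collapses (on the support of $\lambda_{k_1,k_0}^{(m)}$) to a function of $(k_2,\ldots,k_{n-2},m)$ alone, so that the resulting $\theta$'s depend only on the indices claimed in the statement and not on $(k_0,k_1)$.
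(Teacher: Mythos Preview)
Your proof is correct and follows the same inductive scheme as the paper's: apply \fullref{lem:key1} to expand $d_1 v_p$, use the bracket formula together with \fullref{lem:5.2} to reduce to the inductive hypothesis, and then verify full rank via the block matrix factorisation (your matrix $A$ of coefficients $a_{j,k}^{(p)}$ is exactly the paper's $B$). Your sign bookkeeping is in fact more scrupulous than the paper's, which silently suppresses the signs you take pains to reconcile via the degree constraint $|e_{k_0}|+|e_{k_1}|=|w_m|+1$.
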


\begin{proof}
We argue by induction on $n$.
In the case where $n= 1$, the result is immediate. 

We assume that $n \geq 2$ and that the assertion is true up to $n$.
To simplify, we write $v_p$ for $v_p^{(n+1)}$.
Thanks to 
 \fullref{lem:key1}, we can express
$$
d_1v_p = \sum_{1\leq k \leq q, 1\leq j\leq r}\mu_{kj}^{v_p}e_kv_j^{(n)}
           + \sum_sf_sg_s
$$
with some elements $f_s$ and $g_s$ in $V_{n-1}$, where $\mu_{kj} ^{v_p} \in
\Q$.
Then it follows that
\begin{eqnarray*}
(-1)^\varepsilon \langle v_p, sx_{k_{n+1}, ...,k_0} \rangle
= \langle \sum_{k, j}\mu_{kj}^{v_p}e_kv_j^{(n)} + \sum_sf_sg_s
            \ ; \ sx_{k_{n+1}},  sx_{k_{n}, ...,k_0} \rangle =:\theta,
\end{eqnarray*}
where $\varepsilon =|sx_{k_{n}, ...,k_0}|$.
\fullref{lem:5.2}  allows us to deduce that
\begin{eqnarray*}
\theta &=& \sum_{kj} \mu_{kj}^{v_p} \langle e_k, sx_{k_{n+1}} \rangle
          \langle v_j^{(n)} ,  sx_{k_{n}, ...,k_0}     \rangle \\
       &=& \sum_j  \mu_{k_{n+1}j}^{v_p}\big(\sum_m
               \theta_{k_n,..,k_2,m}^{v_j^{(n)}}\lambda_{k_1,k_2}^{(m)}\big)
 = \sum_m
   \big(\sum_j  \mu_{k_{n+1}j}^{v_p} \theta_{k_n,..,k_2,m}^{v_j^{(n)}}\big)
                                    \lambda_{k_1,k_2}^{(m)}
\end{eqnarray*}
We put $\theta_{k_{n+1},..,k_2,m}^{v_p} =
 \sum_j  \mu_{k_{n+1}j}^{v_p}\theta_{k_n,..,k_2,m}^{v_j^{(n)}}$ and
consider the matrix 
$\big(  \theta_{k_{n+1},..,k_2,m}^{v_p}   \big)$.
Then, by definition, we see that the matrix is decomposed as
$$
\Big(  \theta_{k_{n+1},..,k_2,m}^{v_p}   \Big) =
\Big( \theta_{k_{n+1},I_i}^{v_p}  \Big) =
\left(
 \begin{array}{ccc}
  & \theta_{1I_1}^{v_p} &   \\
  &       \vdots  &  \\
 & \theta_{1I_s}^{v_p} &  \\
\ldots  & \theta_{2I_1}^{v_p} & \ldots \\
 &       \vdots  & \\
  &       \vdots  &\\
 & \theta_{qI_s}^{v_p} &
\end{array}
\right)
=
\left(
 \begin{array}{cccc}
A &   &   &  \\
  & A &   &   \\
  &   & \ddots  &   \\
  &   &   & A
\end{array}
\right) B,
$$
where
$$
A = \Big( \theta_{I_i}^{v_j^{(n)}} \Big) \quad \text{and} \quad
B =
\left(
 \begin{array}{cccc}
  & \mu_{11}^{v_p} &   \\
  &       \vdots  &  \\
 & \mu_{1r}^{v_p} &  \\
\ldots  & \mu_{21}^{v_p} & \ldots \\
 &       \vdots  & \\
  %&       \vdots  &\\
 & \mu_{qr}^{v_p} &
\end{array}
\right) .
$$
Since the set $\{v_p\}$ is a basis for $\overline{V_{n+1}}$, it follows
 that the matrix $B$ is of full rank. By assumption, $A$ is of full rank
 and hence so is $\big(  \theta_{k_{n+1},..,k_2,m}^{v_p}   \big)$.
This completes the proof.
\end{proof}

\fullref{thm:d_1} follows from \fullref{prop:white1} and
the following proposition.

\begin{prop}
 \label{prop:white2}
$d_1\text{\em --depth}(Y) \leq \text{\em WL}(L)$.
\end{prop}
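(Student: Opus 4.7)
The plan is to produce a nonzero $m$-fold iterated Lie bracket in $L$, where $m = d_1\text{-depth}(Y)$. When $m = 0$ the condition $V = V_0$ means $d_1 \equiv 0$ on $V$, and the defining pairing $\langle v; s[x,y]\rangle = \pm\langle d_1 v; sx, sy\rangle$ then forces $L$ to be abelian, so $\text{WL}(L) = 0$ and the inequality is trivial; I assume henceforth that $m \geq 1$. By definition of the $d_1$-depth, $\overline{V_m}$ is nontrivial, so I fix a basis element $v_p^{(m)}$ of $\overline{V_m}$ and apply Lemma~\ref{lem:key2} with $n = m$. This produces the pairing identity
\begin{equation*}
\pm\,\bigl\langle v_p^{(m)},\, s[x_{k_m},[x_{k_{m-1}},\ldots,[x_{k_1},x_{k_0}]\ldots]]\bigr\rangle \;=\; \sum_{m'} \theta_{k_m,\ldots,k_2,m'}^{v_p^{(m)}}\,\lambda_{k_1,k_0}^{(m')},
\end{equation*}
together with the full column-rank statement for the matrix $\bigl(\theta_{k_m,\ldots,k_2,m'}^{v_p^{(m)}}\bigr)$. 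In particular, the column indexed by our chosen $p$ is nonzero, so I can fix indices $k_m^*,\ldots,k_2^*$ for which the tuple $(\theta_{k_m^*,\ldots,k_2^*,m'}^{v_p^{(m)}})_{m'}$ is nonzero.

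The next step exploits the linear independence of the family $\{d_1 w_{m'}\}_{m'}$ in $\wedge^2 V_0$. This independence holds because $\{w_{m'}\}$ is a basis of a complement $\overline{V_1}$ of $V_0 = \ker(d_1|_V)$ in $V_1$, so the restriction of $d_1$ to $\overline{V_1}$ is injective. Thus the vectors $\Lambda^{(m')} := (\lambda_{k_1,k_0}^{(m')})_{(k_1,k_0)}$ are linearly independent, and since $(\theta_{k_m^*,\ldots,k_2^*,m'}^{v_p^{(m)}})_{m'}$ is a nonzero coefficient vector, the combination $\sum_{m'}\theta_{k_m^*,\ldots,k_2^*,m'}^{v_p^{(m)}}\,\Lambda^{(m')}$ is a nonzero vector in $\wedge^2 V_0$. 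Hence there exist indices $k_1^*,k_0^*$ with $\sum_{m'}\theta_{k_m^*,\ldots,k_2^*,m'}^{v_p^{(m)}}\,\lambda_{k_1^*,k_0^*}^{(m')} \neq 0$, and the pairing identity above then gives
\begin{equation*}
\bigl\langle v_p^{(m)},\, s[x_{k_m^*},[x_{k_{m-1}^*},\ldots,[x_{k_1^*},x_{k_0^*}]\ldots]]\bigr\rangle \neq 0.
\end{equation*}
This shows that the $m$-fold bracket $[x_{k_m^*},\ldots,[x_{k_1^*},x_{k_0^*}]\ldots] \in [L,L]^{(m)}$ is nonzero, so $\text{WL}(L) \geq m = d_1\text{-depth}(Y)$, completing the proof.

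The conceptual content has been absorbed into Lemma~\ref{lem:key2} (whose proof in turn rests on Lemma~\ref{lem:key1}); what remains here is the linear-algebra manoeuvre of choosing the ``upper'' indices $(k_m^*,\ldots,k_2^*)$ to make the $\theta$-column nonzero, and then \emph{independently} choosing the ``lower'' indices $(k_1^*,k_0^*)$ to make the resulting combination of the $\Lambda^{(m')}$'s nonzero. I expect the only subtle point to be the $m=0$ edge case, where one must appeal directly to the defining pairing formula rather than to Lemma~\ref{lem:key2}, and the verification that $d_1|_{\overline{V_1}}$ is injective, which is essentially a tautology given how $\overline{V_1}$ was chosen.
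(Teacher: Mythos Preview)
Your proof is correct and follows essentially the same approach as the paper's: both arguments combine the full-rank statement of Lemma~\ref{lem:key2} with the injectivity of $d_1$ on $\overline{V_1}$ to produce a nonvanishing pairing $\langle v_p^{(m)}, s[x_{k_m},\ldots,[x_{k_1},x_{k_0}]\ldots]\rangle$. The only difference is presentational---the paper argues by contradiction (assuming all pairings vanish and deducing that the $\theta$-column is zero), whereas you argue directly (starting from a nonzero $\theta$-column and deducing a nonzero pairing).
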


\begin{proof}
Put $n =d_1\text{--depth}(Y)$. It suffices to prove that the
 inequality holds  in the case where  $n\geq 1$.
Let $\{v_p^{(n)}\}_{1\leq p\leq l_n}$ be a basis for $\overline{V_n}$.
We assume that
$$\langle v_p^{(n)},
    s[x_{k_n},[x_{k_{n-1}}[ ...,[x_{k_1}, x_{k_0}]...]]]
         \rangle = 0
$$
for any $k_n, ..., k_1, k_0$.  Then it is readily seen that
$\sum_m \theta_{k_n,..,k_2,m}^{v_p^{(n)}}\lambda_{k_1,k_0}^{(m)}=0$,
where  $\theta_{k_n,..,k_2,m}^{v_p^{(n)}}$ are rational numbers described in
\fullref{lem:key2}.
Consider the linear combination
$\sum_m \theta_{k_n,..,k_2,m}^{v_p^{(n)}}w_m$ with the basis $\{w_m\}$ for
 $\overline{V_1}$. We have
\begin{eqnarray*}
d_1(\sum_m \theta_{k_n,..,k_2,m}^{v_p^{(n)}}w_m)& =&
  \sum_m \theta_{k_n,..,k_2,m}^{v_p^{(n)}}d_1(w_m) \\
  &=& \sum_m \theta_{k_n,..,k_2,m}^{v_p^{(n)}}
\sum_{k_1,k_0}\lambda_{k_1,k_0}^{(m)}e_{k_1}e_{k_0} \\
&=& \sum_{k_1,k_0}(\sum_m \theta_{k_n,..,k_2,m}^{v_p^{(n)}}
\lambda_{k_1,k_0}^{(m)})e_{k_1}e_{k_0} = 0.
\end{eqnarray*}
It follows that $\sum_m \theta_{k_n,..,k_2,m}^{v_p^{(n)}}w_m \in V_0$
and hence $\theta_{k_n,..,k_2,m}^{v_p^{(n)}}= 0$ for any $m$. Consequently,
$\theta_{k_n,..,k_2,m}^{v_p^{(n)}}$ is zero for any $m, k_n, ..., k_2$,
which is a contradiction.
\end{proof}

In the rest of this section, we shall prove \fullref{lem:model-map}. 
To this end, we first prepare a lemma.  
\begin{lem}
 \label{lem:sullivan-model}
 The map $\eta \co  \Q[V\otimes C_*(\mathbb{L}_{W})] \to
\Q[V\otimes C_*(\mathbb{L}_{W\oplus \Q\{w_{\alpha}\}})]$ in 
\fullref{lem:model-map} is the inclusion of a relative Sullivan
 algebra.   
\end{lem}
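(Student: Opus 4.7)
The plan is to exhibit the inclusion $\eta$ as a relative Sullivan algebra by identifying the new generators and supplying them with a filtration under which $\delta$ drops the filtration modulo the base. First I would use the vector-space decomposition $\mathbb{L}_{W\oplus \Q\{w_\alpha\}} = \mathbb{L}_W \oplus N$, where $N$ is the span of iterated Lie brackets containing at least one occurrence of $w_\alpha$, to write $C_*(\mathbb{L}_{W\oplus \Q\{w_\alpha\}}) = \wedge(s\mathbb{L}_W\oplus sN) \cong C_*(\mathbb{L}_W)\otimes \wedge sN$ as graded vector spaces; setting $T := C_*(\mathbb{L}_W)\otimes \wedge^+ sN$ gives the algebra decomposition $\Q[V\otimes C_*(\mathbb{L}_{W\oplus \Q\{w_\alpha\}})] \cong \Q[V\otimes C_*(\mathbb{L}_W)]\otimes \Q[V\otimes T]$, in which $\eta$ is the inclusion of the first tensor factor. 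The content of the lemma is then to build an appropriate filtration on the new generators $V\otimes T$.

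The filtration I would use combines two natural gradings. On the coalgebra side, $w_\alpha$-count refines $T$ into $T = \bigoplus_{k\geq 1} T_{(k)}$; it is additive under the coproduct of $C_*$ and is preserved or decreased by one under $d_{C_*} = d_v + d_h$, because $d_L w_\alpha = z_\alpha \in \mathbb{L}_W$ is the only place where count drops and the Lie bracket is count-additive. On the $V$ side, since $(\wedge V, d)$ is a minimal Sullivan algebra it admits the standard Sullivan filtration $V(0)\subset V(1)\subset \cdots$ with $d(V(i))\subset \wedge V(i-1)$; this is compatible with (and a refinement of) the $d_1$-depth filtration used in \cite[Proposition 5.3]{K} to put the relative Sullivan structure on $\Q[V\otimes C_*(\mathbb{L}_W)]$ over $\Q[V]$. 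I would set the filtration level of a generator $v\otimes \theta$ with $v\in V(i)$ and $\theta \in T_{(k)}$ to $i+k-1$, refined by an auxiliary wedge-length filtration on $\wedge sN$ as a tiebreaker.

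To verify $\delta(F_s)\subset \Q[V\otimes C_*(\mathbb{L}_W)]\otimes \wedge F_{s-1}$, I would analyse the two parts of $\delta(v\otimes\theta)$ given by formula \ref{eq:2.1}. The iterated-coproduct sum $\sum_j \pm (v_1\otimes \beta_{j_1\ast})\cdots(v_m\otimes\beta_{j_m\ast})$ is controlled because all $v_l$ coming from $dv$ lie in $V(i-1)$, and because the $w_\alpha$-counts of the $\beta_{j_l\ast}$ sum to $k$: at most one factor can attain count $k$, and the remaining factors then lie in the base $C_*(\mathbb{L}_W)$. The linear contribution $\pm v\otimes d_{C_*}\theta$ produces terms in the base (from count-dropping components of $d_{C_*}\theta$) and terms in $V\otimes T_{(k-1)}$, which lie in $F_{s-1}$. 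The main obstacle I expect is the residual count-preserving part of $d_{C_*}\theta$, which a priori stays at the same level; I would handle it with the auxiliary wedge-length filtration on $\wedge sN$ (which strictly decreases under $d_h$) combined with an induction on bracket length in $N$. An alternative route is to first upgrade \cite[Remark 5.4]{K} to show that the target algebra $\Q[V\otimes C_*(\mathbb{L}_{W\oplus \Q\{w_\alpha\}})]$ is itself Sullivan, then verify that its Sullivan filtration intersects the subalgebra $\Q[V\otimes C_*(\mathbb{L}_W)]$ in the latter's Sullivan filtration, yielding the relative-Sullivan conclusion for $\eta$ directly.
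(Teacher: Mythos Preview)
Your decomposition is exactly right and matches the paper: write $\mathbb{L}_{W\oplus\Q\{w_\alpha\}}=\mathbb{L}_W\oplus Z$ (your $N$), set $U=\wedge(s\mathbb{L}_W)\otimes\wedge^+(sZ)$ (your $T$), and identify the new generators of $\eta$ as $V\otimes U$. The gap is in the filtration you put on $V\otimes U$.

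Your primary filtration (Sullivan level of $v$ plus $w_\alpha$-count of $\theta$) runs into the obstacle you name, and neither of your proposed tiebreakers closes it. Wedge-length in $\wedge sN$ does \emph{not} strictly decrease under $d_h$: for $n\in N$ and $x\in\mathbb{L}_W$ one has $d_h(sn\wedge sx)=\pm s[n,x]\in sN$, so the $sN$-wedge-length is preserved. Bracket length in $N$ goes the wrong way under $d_v$: with $x\in W$, the term $d_v(s[w_\alpha,x])$ contains $\pm s[w_\alpha,d_Lx]$, which has the same $w_\alpha$-count $1$, the same $sN$-wedge-length $1$, and strictly \emph{higher} bracket length, since $d_Lx$ is decomposable in the minimal Lie model. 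So the count-preserving linear part $v\otimes d_{C_*}\theta$ survives every refinement you propose.

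The paper's filtration is much simpler and avoids all of this: put $V(k)=\bigoplus_{i+j\le k}V^i\otimes U_{(j)}$, where $i$ is the ordinary cohomological degree of $v$ and $j$ the homological degree of $\theta$. The linear piece $v\otimes d_{C_*}\theta$ then drops the level by one automatically, because $d_{C_*}$ has homological degree $-1$; this is exactly what your $w_\alpha$-count misses. For the terms coming from $dv$ via the coproduct formula (2.1), one uses only that $(\wedge V,d)$ is minimal (so each monomial in $dv$ has length $m\ge 2$) and that $V$ is concentrated in degrees $\ge 2$: any factor $v_l\otimes\beta_l$ with $\beta_l\in U$ then has $\deg v_l\le i-1$ and $\deg\beta_l\le j$, hence level at most $i+j-1$. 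No $w_\alpha$-count, wedge-length, or bracket-length bookkeeping is needed.
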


\begin{proof}
We write $\mathbb{L}_{W\oplus \Q\{w_{\alpha}\}}= \mathbb{L}_{W}\oplus Z$
 with appropriate vector space $Z$. 
Then the $C_*(\mathbb{L}_{W\oplus \Q\{w_{\alpha}\}})$ is decomposed as 
$C_*(\mathbb{L}_{W\oplus \Q\{w_{\alpha}\}})= \wedge(s\mathbb{L}_{W})\otimes 
\wedge(sZ)= \wedge(s\mathbb{L}_{W})\otimes 1 \oplus 
\wedge(s\mathbb{L}_{W})\otimes \wedge(sZ)^+$. We see that 
$V\otimes C_*(\mathbb{L}_{W\oplus \Q\{w_{\alpha}\}})= 
 V\otimes C_*(\mathbb{L}_{W})\oplus V\otimes U$ and hence 
$\Q[V\otimes C_*(\mathbb{L}_{W\oplus \Q\{w_{\alpha}\}})]=
\Q[V\otimes C_*(\mathbb{L}_{W})]\otimes \Q[V\otimes U]$, where 
$U =\wedge(s\mathbb{L}_{W})\otimes \wedge(sZ)^+$.  Let $U_{(j)}$ be 
the vector subspace of $U$ consisting of elements with ordinary 
homology degree $j$, namely  
$U_{(j)}=(\wedge(s\mathbb{L}_{W})\otimes \wedge(sZ)^+)_j$. 
Put $V(k)= \oplus_{i+j\leq k}V_{ij}$, 
where $V_{ij}=V^i\otimes U_{(j)}$. 
It is readily seen that $\cup_kV(k)= V\otimes U$ and 
$\delta(V(k))\subset \Q[V\otimes C_*(\mathbb{L}_{W})]\otimes 
\Q[V(k-1)]$. Thus we have the result.  
\end{proof}

\noindent
\proof[Proof of \fullref{lem:model-map}]
Let $i \co  X \to X\cup_{\alpha}e^{k+1}$ be the inclusion map and 
$l \co  C_*(\mathbb{L}_{W}) 
\to C_*(\mathbb{L}_{W\oplus \Q\{w_{\alpha}\}})$ 
the DGC map induced by the natural inclusion 
$\mathbb{L} \to \mathbb{L}_{W\oplus \Q\{w_{\alpha}\}}$. 
Then there exists a homotopy commutative diagram 
$$
\xymatrix@C25pt@R15pt{
A_{PL}(X) & A_{PL}(X\cup_{\alpha}e^{k+1}) \ar[l]_(0.6){A_{PL}(i)} \\
C^*(\mathbb{L}_{W}) \ar[u]^{\simeq}& 
C^*(\mathbb{L}_{W\oplus \Q\{w_{\alpha}\}}) \ar[l]_{l^*}\ar[u]_{\simeq}, 
}
$$
where two vertiacal arrows 
are quasi-isomorophisms and $l^*$ denotes the dual map to $l$. 
By considering a Sullivan model 
$\xymatrix@=15pt{ C^*(\mathbb{L}_{W\oplus \Q\{w_{\alpha}\}}) \ \ar@{>->}[r]  & D}$ 
for $l^*$ and applying Lifting lemma \cite[Lemma 3.6]{F-H-T1}, 
we have a commutative diagaram 
$$
\xymatrix@C25pt@R15pt{ 
A_{PL}(X) & A_{PL}(X\cup_{\alpha}e^{k+1}) \ar[l]_(0.6){A_{PL}(i)} \\
D \ar[u] \ar[d]&  C^*(\mathbb{L}_{W\oplus \Q\{w_{\alpha}\}})   
\ar[l] \ar[u] \ar[d]^{=}\\
C^*(\mathbb{L}_{W}) & 
C^*(\mathbb{L}_{W\oplus \Q\{w_{\alpha}\}}) \ar[l]_{l^*} 
}
$$
in which vertial arrows are quasi-isomorophisms. 
Thus from the naturality of the model 
due to Brown and Szczarba, we can construct a commutative diagram 
$$
\xymatrix@=20pt{
\Q[V] \ar[d]_{m_1}
  \ar[r]_(0.5){\simeq} & \bullet \ar[d] &  
  \bullet \ar[l]^{\simeq} \ar@<-2.3ex>[d] \cdots  \bullet 
    \ar[r]_{\simeq} \ar@<2.3ex>[d] & \bullet \ar[d] & 
A_{PL}(Y) \ar[l]^(0.6){\simeq} \ar[d]^{A_{PL}(ev_*)}   \\
\Q[V\otimes C_*(\mathbb{L}_{W})] \ar[d]_{\eta}
  \ar[r]_(0.6){\simeq} & \bullet \ar[d] &  
  \bullet \ar[l]^{\simeq}  \ar@<-2.3ex>[d]   \cdots   \bullet 
    \ar[r]_{\simeq}  \ar@<2.3ex>[d] & \bullet \ar[d] & 
A_{PL}(\F(X, Y)) \ar[l]^(0.6){\simeq} \ar[d]^{A_{PL}(i^\sharp)}   \\
\Q[V\otimes C_*(\mathbb{L}_{W\oplus \Q\{w_{\alpha}\}})]
  \ar[r]_(0.8){\simeq} & \bullet  &  
  \bullet \ar[l]^{\simeq}    \cdots   \bullet 
    \ar[r]_{\simeq} & \bullet & 
A_{PL}(\F(X\cup_{\alpha}e^{k+1}, Y)) \ar[l]^(0.8){\simeq}   
}
$$ 
in the category of DGA's in which all the  horizontal arrows are
quasi-isomorph\-isms (for the DGA's represented by dots, see \cite{B-S} 
and also \cite[Section 3]{K}, 
the previous and ensuring discussions). The reslts \cite[Proposition 5.3]{K} 
and \fullref{lem:sullivan-model} assert that $m_1$ and $\eta$ 
are the inclusions of 
relative Sullivan algebras. Thus by applying Lifting lemma repeatedly, 
we have the two front commutative squares 
in \fullref{lem:model-map}.  The commutativity of the back
  square follows from that of the two side triangles. This completes the
  proof.  
\endproof

\bibliographystyle{gtart} \bibliography{link}

\end{document}